\numberwithin{equation}{section}
 \newtheorem{theorem}{Theorem}[section]
 \newtheorem{lemma}[theorem]{Lemma}
\def\3bar{{|\hspace{-.02in}|\hspace{-.02in}|}}
\def\E{{\mathcal{E}}}
\def\T{{\mathcal{T}}}
\def\beta{\boldsymbol{\eta}}
\def\cal#1{{\mathcal #1}}
\def\pT{{\partial T}}
\def\bw{{\mathbf{w}}}
\def\bf{{\mathbf{f}}}
\def\bu{{\mathbf{u}}}
\def\bv{{\mathbf{v}}}
\def\bn{{\mathbf{n}}}
\def\bzeta{{\boldsymbol{\zeta}}} 
\def\bvarphi{{\boldsymbol{\varphi}}}
\newtheorem{remark}{Remark}[section]
\newtheorem{algorithm}{Auto-Stabilized WG Algorithm}[section]
\numberwithin{equation}{section}
\def\3bar{{|\hspace{-.02in}|\hspace{-.02in}|}}
\def\p#1{\begin{pmatrix}#1\end{pmatrix}}
 \def\cal#1{\mathcal{#1}}
  \def\b#1{\mathbf{#1}} 
 \def\an#1{\begin{align}#1\end{align}}
\begin{document}

\title []
 {Auto-Stabilized Weak Galerkin Finite Element Methods for Stokes Equations on Non-Convex Polytopal Meshes}

  \author {Chunmei Wang}
  \address{Department of Mathematics, University of Florida, Gainesville, FL 32611, USA. }
  \email{chunmei.wang@ufl.edu}
  \thanks{The research of Chunmei Wang was partially supported by National Science Foundation Grant DMS-2136380.}

\author {Shangyou Zhang}
\address{Department of Mathematical Sciences,  University of Delaware, Newark, DE 19716, USA}   \email{szhang@udel.edu}

\begin{abstract}
 This paper introduces an auto-stabilized weak Galerkin (WG) finite element method for solving Stokes equations without relying on traditional stabilizers. The proposed WG method accommodates both convex and non-convex polytopal elements in finite element partitions, leveraging bubble functions as a key analytical tool. The simplified WG method is symmetric and positive definite, and optimal-order error estimates are derived for WG approximations in both the discrete $H^1$ norm and the $L^2$ norm.
\end{abstract}

\keywords{weak Galerkin, auto-stabilized,    weak gradient, weak divergence,  bubble functions,  non-convex, polytopal meshes, Stokes equations.}

\subjclass[2010]{65N30, 65N15, 65N12, 65N20}
 
\maketitle

\section{Introduction}  
In this paper, we propose a novel weak Galerkin (WG) finite element method for the Stokes equations, designed to eliminate the need for traditional stabilizers while being applicable to non-convex polygonal or polyhedral meshes. Specifically, we consider the Stokes equations with Dirichlet and Neumann boundary conditions: find the unknown functions $\bu$ and $p$ such that
\begin{equation}\label{model}
 \begin{split}
  -\Delta \bu+\nabla p=&\bf, \qquad\qquad \text{in}\quad \Omega,\\
 \nabla\cdot  \bu=&0,\qquad\qquad \text{in}\quad \Omega,\\
 \bu=&0,\qquad\qquad \text{on}\quad \partial\Omega,
 \end{split}
 \end{equation}
where $\Omega\subset \mathbb R^d$ ($d=2, 3$) is a polygonal or polyhedral domain.

The variational formulation of the Stokes problem \eqref{model} is as follows: Find   $\bu\in [H_0^1(\Omega)]^d$ and $p\in L_0^2(\Omega)$ such that the following equations hold: 
\begin{equation}\label{weak}
 \begin{split}
     (\nabla \bu, \nabla \bv)-(\nabla \cdot\bv, p)=&(\bf, \bv), \qquad \forall \bv\in [H_0^1(\Omega)]^d \\
     (\nabla \cdot\bu, q)=&0, \qquad \qquad\forall q\in L_0^2(\Omega),
 \end{split}
 \end{equation} 
  where  $H_0^1(\Omega)=\{w\in H^1(\Omega): w|_{\partial\Omega}=0\}$ and $L_0^2(\Omega)=\{q\in L^2(\Omega); \int_{\Omega} qdx=0\}$. 

Over the past several decades, finite element methods (FEMs) for solving \eqref{model} have been extensively developed based on the weak formulation \eqref{weak}. These methods employ finite-dimensional subspaces of $[H_0^1(\Omega)]^d\times L_0^2(\Omega)$, composed of piecewise polynomials, and require the construction of finite element spaces that satisfy the inf-sup condition established by Babuska \cite{1} and Brezzi \cite{3}. Although effective, the inf-sup condition imposes significant constraints on element design and mesh generation, reducing flexibility. For specific examples and further details on classical FEMs for the Stokes equations, readers are referred to \cite{6}.
In addition to FEMs, other numerical approaches have been developed for solving the Stokes equations, including finite volume methods (FVMs) \cite{z5} and finite difference methods (FDMs) \cite{z4, z16, z17}. 

The WG finite element method provides a novel framework for the numerical solution of partial differential equations (PDEs). This method approximates differential operators within a framework inspired by distribution theory, particularly for piecewise polynomial functions. Unlike traditional approaches, WG reduces regularity requirements on approximating functions by employing carefully designed stabilizers. Extensive research has demonstrated the versatility and effectiveness of WG methods across a broad spectrum of model PDEs \cite{wg1, wg2, wg3, wg4, wg5, wg6, wg7, wg8, wg9, wg10, wg11, wg12, wg13, wg14, fedi, wg15, wg16, wg17, wg18, wg19, wg20, wg21, itera, wy3655}, establishing them as a powerful tool in scientific computing.

A distinctive feature of WG methods is their innovative use of weak derivatives and weak continuity to develop numerical schemes based on the weak forms of PDEs. This unique structure grants WG methods exceptional flexibility, enabling them to handle a wide variety of PDEs while maintaining stability and accuracy in numerical solutions. WG methods have been specifically proposed for solving the Stokes equations \cite{yewang}, including stabilizer-free WG methods designed for convex polytopal meshes \cite{zhang, zhangstokes2020}.

A significant advancement within the WG framework is the Primal-Dual Weak Galerkin (PDWG) method, which effectively addresses challenges commonly faced by traditional numerical methods \cite{pdwg1, pdwg2, pdwg3, pdwg4, pdwg5, pdwg6, pdwg7, pdwg8, pdwg9, pdwg10, pdwg11, pdwg12, pdwg13, pdwg14, pdwg15}. The PDWG method formulates numerical solutions as a constrained minimization problem, where the constraints reflect the weak formulation of PDEs through the use of weak derivatives. This approach leads to an Euler-Lagrange equation that incorporates both primal variables and dual variables (Lagrange multipliers), resulting in a robust and efficient numerical scheme.

This paper presents a simplified formulation of the WG  finite element method that supports both convex and non-convex polygonal or polyhedral elements in finite element partitions. Recently, this auto-stabilized WG approach has been successfully applied to the Poisson equation \cite{autosecon}, the biharmonic equation \cite{autobihar}, and linear elasticity problems \cite{autoelas}. A key innovation of the proposed method is the elimination of stabilizers through the use of higher-degree polynomials for computing weak gradient and weak divergence operators. This approach preserves the size and global sparsity of the stiffness matrix while significantly reducing the programming complexity compared to traditional stabilizer-dependent methods. By leveraging bubble functions as a critical analytical tool, the method extends WG to non-convex polytopal elements in finite element partitions, marking a significant advancement over existing stabilizer-free WG methods \cite{zhang, zhangstokes2020}, which are limited to convex polytopal meshes.
Theoretical analysis establishes optimal error estimates for the WG approximations in both the discrete 
$H^1$ norm and the 
$L^2$ norm.

 This paper is organized as follows. Section 2 provides a brief overview of the weak gradient and weak divergence operators, along with their discrete counterparts. In Section 3, we present an auto-stabilized WG scheme for Stokes equations that eliminates the need for stabilization terms and is applicable to non-convex polytopal meshes. Section 4 establishes the existence and uniqueness of the solution for the proposed scheme. In Section 5, the error equation for the WG scheme is derived. Section 6 focuses on obtaining error estimates for the numerical approximation in the discrete
$H^1$
 norm, while Section 7 extends the analysis to derive error estimates in the
$L^2$ norm. Finally, Section 8 presents numerical tests to validate the theoretical results.

In this paper, we use standard notations throughout. Let 
 $D$ represent any open, bounded domain with a Lipschitz continuous boundary in $\mathbb{R}^d$. The inner product, semi-norm, and norm in the Sobolev space  $H^s(D)$ for any integer $s\geq0$ are denoted by $(\cdot,\cdot)_{s,D}$, $|\cdot|_{s,D}$ and $\|\cdot\|_{s,D}$ respectively. For simplicity, when the domain $D$ is $\Omega$, the subscript $D$ is omitted. Additionally, when $s=0$, the notations  
$(\cdot,\cdot)_{0,D}$, $|\cdot|_{0,D}$ and $\|\cdot\|_{0,D}$ are further  simplified to $(\cdot,\cdot)_D$, $|\cdot|_D$ and $\|\cdot\|_D$, respectively.

\section{Discrete Weak Gradient and Discrete Weak Divergence} 
This section revisits the definitions of weak gradient and weak divergence operators, as well as their discrete formulations, as introduced in \cite{yewang}.

Consider a polytopal element $T$ with boundary  $\partial T$. A weak function on $T$  is expressed   as  $\bv=\{\bv_0, \bv_b\}$, where $\bv_0\in [L^2(T)]^d$ represents the interior values of  $\bv$, and  $\bv_b\in [L^{2}(\partial T)]^d$ represents its boundary values. Notably,
  $\bv_b$ is treated independently from the trace of $\bv_0$ on $\partial T$.  
 
The set of all weak functions on  $T$, denote by  $W(T)$, is defined as  
 \begin{equation*}\label{2.1}
 W(T)=\{\bv=\{\bv_0, \bv_b\}: \bv_0\in [L^2(T)]^d, \bv_b\in [L^{2}(\partial
 T)]^d\}.
\end{equation*}
 
 The weak gradient $\nabla_w\bv$ is a linear
 operator mapping 
 $W(T)$ to the dual space of $[H^1(T)]^{d\times d}$. For any
 $\bv\in W(T)$, $\nabla_w\bv$ is defined as a bounded linear functional in   the dual space of $[H^1(T)]^{d\times d}$, given by:
 \begin{equation*} 
  (\nabla_w\bv, \bvarphi)_T=-(\bv_0, \nabla \cdot \bvarphi)_T+
  \langle \bv_b,   \bvarphi \cdot \bn \rangle_{\partial T},\quad \forall \bvarphi\in [H^1(T)]^{d\times d},
  \end{equation*}
 where $\bn$, with components $n_i (i=1,\cdots, d)$,  is the unit outward normal direction to $\partial T$.

Similarly, the weak divergence $\nabla_w\cdot \bv$ is a linear
 operator mapping 
 $W(T)$ to the dual space of $H^1(T)$. For  
 $\bv\in W(T)$, $\nabla_w\cdot \bv$ is defined as a bounded linear functional in the dual space of $H^1(T)$, given by:
 \begin{equation*} 
  (\nabla_w\cdot \bv, w)_T=-(\bv_0, \nabla  w)_T+
  \langle \bv_b\cdot\bn,  w \rangle_{\partial T},\quad \forall w\in H^1(T).
  \end{equation*} 
 
 For any non-negative integer $r\ge 0$, let $P_r(T)$ denote the space of
 polynomials on $T$ with total degree at most 
 $r$.   The discrete weak gradient 
 $\nabla_{w, r, T}\bv$ for   $\bv\in W(T)$ is  the unique polynomial  in $[P_r(T)]^{d\times d}$ satisfying
 \begin{equation}\label{2.4}
(\nabla_{w, r, T}\bv, \bvarphi)_T=-(\bv_0, \nabla \cdot \bvarphi)_T+
  \langle \bv_b,   \bvarphi \cdot \bn \rangle_{\partial T},\quad \forall \bvarphi \in [P_r(T)]^{d\times d}.
  \end{equation}
If  $\bv_0\in
 [H^1(T)]^d$ is smooth, applying integration by parts to the first term on the right-hand side  of (\ref{2.4})  yields:
 \begin{equation}\label{2.4new}
(\nabla_{w, r, T}\bv, \bvarphi)_T= (\nabla \bv_0,  \bvarphi)_T+
  \langle \bv_b-\bv_0,   \bvarphi \cdot \bn \rangle_{\partial T}, \quad \forall \bvarphi \in [P_r(T)]^{d\times d}.
  \end{equation}

 The discrete weak divergence
 $\nabla_{w, r, T}\cdot\bv$   for $\bv\in W(T)$ is  the unique polynomial  in $P_r(T)$ satisfying
 \begin{equation}\label{div}
(\nabla_{w, r, T}\cdot \bv, w)_T=-(\bv_0, \nabla  w)_T+
  \langle \bv_b\cdot\bn,  w \rangle_{\partial T},\quad \forall w\in P_r(T).
  \end{equation}
If $\bv_0\in
 [H^1(T)]^d$ is smooth, integrating by parts in the first term on the right-hand side of (\ref{2.4})  yields:
 \begin{equation}\label{divnew}
(\nabla_{w, r, T}\cdot \bv, w)_T= (\nabla   \cdot\bv_0,  w)_T+
  \langle (\bv_b-\bv_0)\cdot\bn,  w \rangle_{\partial T},\quad \forall w\in P_r(T).
  \end{equation}  

\section{Auto-Stabilized Weak Galerkin Algorithms }\label{Section:WGFEM}
 Let ${\cal T}_h$ be a finite element partition of the domain
 $\Omega\subset \mathbb R^d$ into polytopal elements, satisfying the shape regularity condition outlined in  \cite{wy3655}.
Let ${\mathcal E}_h$ represent the set of all edges or faces in
 ${\cal T}_h$, and and let  ${\mathcal E}_h^0={\mathcal E}_h \setminus
 \partial\Omega$ denote the subset of interior edges or faces. For any element $T\in {\cal T}_h$, let $h_T$ be its diameter, and define the mesh size as 
 $h=\max_{T\in {\cal
 T}_h}h_T$.

 Let $k$ be an integer  such that $k\geq 1$. For each element $T\in\T_h$, the local weak finite element space is defined as:
 \begin{equation*}
 V(k,   T)=\{\{\bv_0,\bv_b\}: \bv_0\in [P_k(T)]^d, \bv_b\in [P_{k}(e)]^d, e\subset \partial T\}.    
 \end{equation*}
The global weak finite element space $V_h$ is obtained by assembling the local spaces $V(k, T)$ for all $T\in {\cal T}_h$ and enforcing the continuity of $\bv_b$ along interior edges or faces  $\E_h^0$: 
 $$
 V_h=\big\{\{\bv_0,\bv_b\}:\ \{\bv_0,\bv_b\}|_T\in V(k, T),
 \forall T\in {\cal T}_h \big\}.
 $$ 
The subspace of $V_h$ consisting of functions with zero boundary values on $\partial\Omega$ is defined as:
$$
V_h^0=\{\bv\in V_h: \bv_b|_{\partial\Omega}=0\}.
$$

For the pressure variable, the weak Galerkin finite element space is defined as: 
$$
W_h=\{q\in L_0^2(\Omega): q|_T \in P_{k-1}(T)\}.
$$

For simplicity, the discrete weak gradient  $\nabla_{w} \bv$  and the discrete weak divergence $\nabla_{w} \cdot\bv$ refer to the operators 
$\nabla_{w, r, T}\bv$ and $\nabla_{w, r, T} \cdot\bv$, respectively, as defined in (\ref{2.4}) and (\ref{div}) for each $T\in {\cal T}_h$:
$$
(\nabla_{w} \bv)|_T= \nabla_{w, r, T}(\bv |_T), \qquad \forall T\in \T_h, 
$$
$$
(\nabla_{w}\cdot \bv)|_T= \nabla_{w, r, T}\cdot (\bv |_T), \qquad \forall T\in \T_h.
$$
 
On each element $T\in\T_h$, let $Q_0$  denote the $L^2$ projection onto $P_k(T)$. On each edge or face  $e\subset\partial T$, let $Q_b$ denote the $L^2$ projection onto $P_{k}(e)$. For any $\bv\in [H^1(\Omega)]^d$,  the $L^2$ projection  into the weak finite element space $V_h$ is  defined as:
 $$
  (Q_h \bv)|_T:=\{Q_0(\bv|_T),Q_b(\bv|_{\pT})\},\qquad \forall T\in\T_h.
$$

The  simplified WG numerical scheme, free from stabilization terms, for solving the Stokes equations \eqref{model} is given as follows:
\begin{algorithm}\label{PDWG1}
Find $\bu_h=\{\bu_0, \bu_b\} \in V_h^0$ and $p_h\in W_h$  such that  
\begin{equation}\label{WG}
\begin{split}
  (\nabla_w \bu_h, \nabla_w \bv_h)-(\nabla_w \cdot\bv_h, p_h)=&(\bf, \bv_0), \qquad \forall \bv_h\in V_h^0,\\
     (\nabla_w \cdot\bu_h, q_h)=&0, \qquad\qquad \forall q_h\in W_h, 
\end{split}
\end{equation}
where 
$$
(\cdot, \cdot)=\sum_{T\in {\cal T}_h}(\cdot, \cdot)_T.
$$
\end{algorithm}

\section{Solution Existence and Uniqueness}  
Recall that ${\cal T}_h$ is a shape-regular finite element partition of the domain $\Omega$. For any  $T\in {\cal T}_h$ and $\phi\in H^1(T)$, the following trace inequality holds \cite{wy3655}: 
\begin{equation}\label{tracein}
 \|\phi\|^2_{\partial T} \leq C(h_T^{-1}\|\phi\|_T^2+h_T \|\nabla \phi\|_T^2).
\end{equation}
If $\phi$ is a polynomial on  $T$,  a simplified trace inequality applies \cite{wy3655}: 
\begin{equation}\label{trace}
\|\phi\|^2_{\partial T} \leq Ch_T^{-1}\|\phi\|_T^2.
\end{equation}

For any $\bv=\{\bv_0, 
\bv_b\}\in V_h$, define the norm: 
\begin{equation}\label{3norm}
\3bar \bv\3bar= (\nabla_{w} \bv, \nabla_{ w} \bv) ^{\frac{1}{2}},
\end{equation}
and  the discrete  $H^1$- semi-norm: 
\begin{equation}\label{disnorm}
\|\bv\|_{1, h}=\Big( \sum_{T\in {\cal T}_h} \|\nabla \bv_0\|_T^2+h_T^{-1}\|\bv_0-\bv_b\|_{\partial T}^2 \Big)^{\frac{1}{2}}.
\end{equation}

\begin{lemma}\label{norm1}
 For $\bv=\{\bv_0, \bv_b\}\in V_h$, there exists a constant $C$ such that
 $$
 \|\nabla 
 \bv_0\|_T\leq C\|\nabla_w \bv\|_T.
 $$
\end{lemma}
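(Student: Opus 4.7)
The plan is to test the identity (\ref{2.4new}) against a cleverly chosen polynomial $\bvarphi$ that kills the boundary contribution yet reproduces $\|\nabla \bv_0\|_T^2$ on the right-hand side. A naive attempt with $\bvarphi=\nabla\bv_0$ produces a leftover term $\langle \bv_b-\bv_0,\nabla\bv_0\cdot\bn\rangle_{\partial T}$ that cannot be absorbed by $\|\nabla_w\bv\|_T$ alone, so one must localize the test function to the interior of $T$ by multiplying against a bubble function.

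Concretely, I would introduce an element bubble function $\phi_B$ supported on $T$ that vanishes on $\partial T$, is nonnegative on $T$, and is strictly positive on a suitable interior subdomain large enough that
$$
(q,\phi_B q)_T \geq c\,\|q\|_T^2, \qquad \|\phi_B q\|_T\leq C\,\|q\|_T
$$
hold uniformly on the finite-dimensional space $[P_{k-1}(T)]^{d\times d}$, in which $\nabla\bv_0$ lives. Choosing $r$ large enough that $\phi_B\nabla\bv_0\in[P_r(T)]^{d\times d}$, substituting $\bvarphi=\phi_B\nabla\bv_0$ into (\ref{2.4new}) eliminates the boundary term and yields
$$
(\nabla_w\bv,\phi_B\nabla\bv_0)_T=(\nabla\bv_0,\phi_B\nabla\bv_0)_T\geq c\,\|\nabla\bv_0\|_T^2.
$$
An application of Cauchy--Schwarz on the left gives $(\nabla_w\bv,\phi_B\nabla\bv_0)_T\leq \|\nabla_w\bv\|_T\|\phi_B\nabla\bv_0\|_T\leq C\|\nabla_w\bv\|_T\|\nabla\bv_0\|_T$. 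Combining the two bounds and cancelling one factor of $\|\nabla\bv_0\|_T$ delivers the asserted inequality.

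The principal obstacle is securing the weighted norm equivalence on non-convex polytopal elements. For convex simplicial $T$ one simply takes $\phi_B$ as a product of barycentric coordinates and invokes a standard scaling argument, but on the non-convex polytopal meshes considered in this paper the existence of a suitable $\phi_B$ requires the specific bubble function construction highlighted in the introduction as the key analytical tool of the method. Once such a $\phi_B$ is in hand, together with the degree bookkeeping that fixes the integer $r$ in the definition of $\nabla_{w,r,T}$, the rest of the argument is essentially a three-line application of Cauchy--Schwarz.
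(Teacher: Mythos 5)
Your proposal follows essentially the same route as the paper: the paper takes $\Phi_B=l_1^2(x)\cdots l_N^2(x)$ built from the linear functions vanishing on the faces of $T$, sets $r=2N+k-1$ so that $\bvarphi=\Phi_B\nabla\bv_0\in[P_r(T)]^{d\times d}$, substitutes into \eqref{2.4new} to kill the boundary term, and concludes via the domain inverse inequality (your weighted norm equivalence) and Cauchy--Schwarz. The argument is correct and matches the paper's proof in all essentials.
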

\begin{proof} Let  $T\in {\cal T}_h$ be a polytopal element with $N$ edges or faces,  denoted $e_1, \cdots, e_N$. Note that $T$ is not required to be convex. For each edge  or face    $e_i$, define a linear equation $l_i(x)$ such that  $l_i(x)=0$ on $e_i$ as follows: 
$$l_i(x)=\frac{1}{h_T}\overrightarrow{AX}\cdot \bn_i, $$  where  $A$ is a given point on $e_i$, $X$ is any point on  $e_i$, $\bn_i$ is the unit outward normal direction to $e_i$, and $h_T$ is the  size of the element $T$.

The bubble function $\Phi_B$ of  the element  $T$ is  defined as 
 $$
 \Phi_B =l^2_1(x)l^2_2(x)\cdots l^2_N(x) \in P_{2N}(T).
 $$ 
 It is straightforward to verify that  $\Phi_B=0$ on  $\partial T$.    The bubble  function 
  $\Phi_B$  can be scaled such that $\Phi_B(M)=1$ where   $M$ is the barycenter of the element $T$. Furthermore,  there exists a sub-domain $\hat{T}\subset T$ such that $\Phi_B\geq \rho_0$ for some constant $\rho_0>0$.

Given  $\bv=\{\bv_0, \bv_b\}\in V_h$, let  $r=2N+k-1$ and
$\bvarphi=\Phi_B \nabla \bv_0\in [P_r(T)]^{d\times d}$.  Substituting $\bvarphi$ into \eqref{2.4new}, we have
\begin{equation}\label{t1}
\begin{split}
(\nabla_w \bv, \Phi_B \nabla \bv_0)_T&=(\nabla \bv_0, \Phi_B \nabla \bv_0)_T+\langle \bv_b-\bv_0,  \Phi_B \nabla \bv_0 \cdot \bn\rangle_{\partial T}\\&=(\nabla \bv_0, \Phi_B \nabla \bv_0)_T,
\end{split}
\end{equation}
where the second equality follows from the fact that  $\Phi_B=0$ on $\partial T$.

From the domain inverse inequality \cite{wy3655},  there exists a constant $C$ such that 
\begin{equation}\label{t2}
(\nabla \bv_0, \Phi_B \nabla \bv_0)_T \geq C (\nabla \bv_0, \nabla \bv_0)_T.
\end{equation} 
Applying the  Cauchy-Schwarz inequality and combining \eqref{t1} and \eqref{t2}, we have
 $$
 (\nabla \bv_0, \nabla \bv_0)_T\leq C (\nabla_w \bv, \Phi_B \nabla \bv_0)_T  \leq C  \|\nabla_w \bv\|_T \|\Phi_B \nabla \bv_0\|_T  \leq C
\|\nabla_w \bv\|_T \|\nabla \bv_0\|_T,
 $$
which gives
 $$
 \|\nabla \bv_0\|_T\leq C\|\nabla_w \bv\|_T.
 $$

This completes the proof of the lemma.
\end{proof}
 
\begin{remark}
   If the polytopal element $T$  is convex, 
  the bubble function used in Lemma \ref{norm1} can be simplified to 
 $$
 \Phi_B =l_1(x)l_2(x)\cdots l_N(x).
 $$ 
It can be verified that there exists a sub-domain $\hat{T}\subset T$,  such that
 $ \Phi_B\geq\rho_0$  for some constant $\rho_0>0$,  and $\Phi_B=0$ on the boundary $\partial T$.   Using this simplified construction, Lemma \ref{norm1} can be proved in the same manner. In this case, we set $r=N+k-1$.  
\end{remark}

Recall that $T$ is a $d$-dimensional polytopal element and  $e_i$ is a $d-1$-dimensional edge or face  of $T$. 
We define an edge/face-based bubble function as  $$\varphi_{e_i}= \Pi_{k=1, \cdots, N, k\neq i}l_k^2(x).$$ It can be verified that  (1) $\varphi_{e_i}=0$ on the edge/face $e_k$ for $k \neq i$; (2) there exists a subdomain $\widehat{e_i}\subset e_i$ such that $\varphi_{e_i}\geq \rho_1$ for some constant $\rho_1>0$.

\begin{lemma}\label{phi}
     For $\bv=\{\bv_0, \bv_b\}\in V_h$, let $\bvarphi=(\bv_b-\bv_0)^T \bn\varphi_{e_i}$, where $\bn$ is the unit outward normal direction to the edge/face  $e_i$. Then, the following inequality holds:
\begin{equation}
  \|\bvarphi\|_T ^2 \leq Ch_T \int_{e_i}|\bv_b-\bv_0|^2ds.
\end{equation}
\end{lemma}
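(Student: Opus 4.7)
The plan is to bound $\|\bvarphi\|_T^2 = \int_T |(\bv_b-\bv_0)^T\bn|^2 \varphi_{e_i}^2\,dx$ by three elementary estimates chained together.

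First, I would use shape regularity to bound the edge bubble $\varphi_{e_i}=\prod_{k\neq i}l_k^2(x)$ uniformly on $T$. Each linear factor $l_k(x)=h_T^{-1}\overrightarrow{AX}\cdot\bn_k$ satisfies $|l_k(x)|\leq C$ on $T$ because $|AX|\leq h_T$, so $\|\varphi_{e_i}\|_{L^\infty(T)}\leq C$ with a constant depending only on the number of faces $N$ (and hence on the shape regularity). Pulling this bound outside the integral reduces the task to proving
$$\int_T |(\bv_b-\bv_0)^T\bn|^2\, dx \leq C h_T \int_{e_i}|\bv_b-\bv_0|^2\, ds.$$

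Second, I would give meaning to $\bv_b$ on all of $T$ by extending $\bv_b|_{e_i}$ (a polynomial of degree at most $k$ on the flat face $e_i$) as a constant in the outward normal direction to $e_i$; the element-interior part $\bv_0$ is already a polynomial on $T$. Since $T$ has diameter at most $h_T$, every chord of $T$ normal to $e_i$ has length at most $h_T$. Slicing $T$ by hyperplanes parallel to $e_i$ and applying Fubini yields
$$\int_T |(\bv_b-\bv_0)^T\bn|^2\, dx \leq h_T \int_{e_i}|(\bv_b-\bv_0)^T\bn|^2\, ds.$$
Third, since $|\bn|=1$, I have $|(\bv_b-\bv_0)^T\bn|\leq |\bv_b-\bv_0|$ on $e_i$, and chaining the three estimates gives the claim.

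The main potential obstacle is in the second step when $T$ is non-convex: the constant-normal extension of $\bv_b$ from $e_i$ may fail to be globally well-defined on all of $T$, or may produce ambiguities where several points of $T$ project to the same point of $e_i$. A robust fallback is a scaling argument: $\bvarphi$ lies in the finite-dimensional polynomial space $\{g\,\varphi_{e_i}:g\in [P_k(T)]^d\}$ whose dimension depends only on $N$ and $k$, so on a unit-scale reference element pulled back via an affine map one has $\|\hat{\bvarphi}\|_{\hat{T}}\leq C\|\hat{\bv}_b-\hat{\bv}_0\|_{\hat{e}_i}$ by equivalence of finite-dimensional norms (coupled with the fact that $\bvarphi$ vanishes on all faces other than $e_i$); pulling back by the affine map reintroduces exactly the factor $h_T$ on the right-hand side and recovers the asserted inequality.
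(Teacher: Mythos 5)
There is a genuine gap, and it sits precisely where you flagged a ``potential obstacle'' --- but the real obstacle is not non-convexity. Your slicing step asserts $\int_T |(\bv_b-\bv_0)^T\bn|^2\,dx\le h_T\int_{e_i}|(\bv_b-\bv_0)^T\bn|^2\,ds$ while keeping $\bv_0$ as the genuine interior polynomial on $T$. Fubini over the normal fibers only yields such a bound when the integrand is constant (or at least controlled) along each fiber, and $(\bv_b-\bv_0)^T\bn$ is not: $\bv_0$ varies in the normal direction. Concretely, take $\bv_0=l_i(x)\,\mathbf{c}$ with $\mathbf{c}\cdot\bn\ne 0$ and $\bv_b=0$ on $e_i$; then $\bv_b-\bv_0$ vanishes on $e_i$, so your right-hand side is zero, while $\bvarphi=(\bv_b-\bv_0)^T\bn\,\varphi_{e_i}$ is not identically zero on $T$, since $l_i$ vanishes only on the hyperplane of $e_i$ and the bubble $\varphi_{e_i}$ vanishes only on the \emph{other} faces. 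The same example defeats your fallback: $g\mapsto\|g\|_{\hat{e}_i}$ is only a seminorm on $[P_k(\hat{T})]^d$ (its kernel contains $l_i\cdot[P_{k-1}(\hat{T})]^d$), so equivalence of finite-dimensional norms cannot give $\|\hat{\bvarphi}\|_{\hat{T}}\le C\|\hat{\bv}_b-\hat{\bv}_0\|_{\hat{e}_i}$.

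The paper's proof avoids this through a step you omitted: before integrating over $T$, it replaces $\bv_0$ in the definition of $\bvarphi$ by $\bv_{trace}$, the polynomial extension of the \emph{trace} of $\bv_0$ on $e_i$, extended (like $\bv_b$) constantly in the direction normal to the hyperplane $H\supset e_i$ via the affine projection $X\mapsto Proj_{e_i}(X)$. With both $\bv_b$ and $\bv_{trace}$ fiber-constant, the volume-to-face comparison $\int_T(\cdot)^2\,dT\le Ch_T\int_{e_i}(\cdot)^2\,ds$ becomes legitimate --- essentially your Fubini argument, supplemented for non-convex $T$ by norm equivalence of polynomials on the shadow of $T$ in $H$ versus on $e_i$ --- and on $e_i$ itself $\bv_{trace}=\bv_0$, so the right-hand side of the claimed estimate is unchanged. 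In other words, the lemma holds (and is used in Lemma \ref{normeqva}) only with $\bvarphi$ built from the trace extension; with the actual interior polynomial $\bv_0$ the inequality is false, so the gap in your argument is not repairable without adopting the paper's extension.
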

\begin{proof}
We begin by extending $\bv_b$, initially defined on the $(d-1)$-dimensional edge/face  $e_i$, to the entire $d$-dimensional polytopal element $T$. This extension is defined as:
$$
 \bv_b (X)= \bv_b(Proj_{e_i} (X)),
$$
where $X$ is any point in $T$, $Proj_{e_i} (X)$ is the orthogonal projection of $X$ onto  the hyperplane $H\subset\mathbb R^d$  containing $e_i$. For points where  $Proj_{e_i} (X) \notin e_i$, $\bv_b(Proj_{e_i} (X))$ is defined as the extension of $\bv_b$ from $e_i$ to  $H$.

We claim that $\bv_b$ remains  a polynomial   on  $T$ after the extension.  

Let   $H$ be the hyperplane containing  $e_i$,  defined by $d-1$ linearly independent vectors $\beta_1, \cdots, \beta_{d-1}$ originating from a point $A$ on $e_i$. Any point $P$ on   $H$ can be parametrized as
$$
P(t_1, \cdots, t_{d-1})=A+t_1\beta_1+\cdots+t_{d-1}\beta_{d-1},
$$
where $t_1, \cdots, t_{d-1}$ are parameters.

Since  $\bv_b(P(t_1, \cdots, t_{d-1}))$ is a polynomial of degree $q$ defined on $e_i$, each component $(\bv_b)_k$ of $\bv_b=((\bv_b)_1, \cdots, (\bv_b)_d)$ can be expressed as:
$$
(\bv_b)_k(P(t_1, \cdots, t_{d-1}))=\sum_{|\alpha|\leq q}c_{\alpha}\textbf{t}^{\alpha}, \qquad\forall k=1,\cdots, d,
$$
where $\textbf{t}^{\alpha}=t_1^{\alpha_1}\cdots t_{d-1}^{\alpha_{d-1}}$ and $\alpha=(\alpha_1, \cdots, \alpha_{d-1})$  is a multi-index.

For any point $X$ in  $T$, the projection of   $X$  onto  the hyperplane $H\subset\mathbb R^d$, which contains the edge/face $e_i$, is the point on  $H$ that minimizes the distance to  $X$. Mathematically, this projection $Proj_{e_i} (X)$ is an affine transformation  expressed as 
$$
Proj_{e_i} (X)=A+\sum_{i=1}^{d-1} t_i(X)\beta_i,
$$
where $t_i(X)$ are the projection coefficients, and $A$ is the origin point on $e_i$ and $\beta_i$ are linearly independent vectors defining $H$. The coefficients $t_i(X)$ are determined  by solving the orthogonality condition:
$$
(X-Proj_{e_i} (X))\cdot \beta_j=0, \forall j=1, \cdots, d-1.
$$
This results in a system of linear equations in $t_1(X)$, $\cdots$, $t_{d-1}(X)$, which  can be solved to yield:
$$
t_i(X)= \text{linear function of} \  X.
$$
Hence, the projection $Proj_{e_i} (X)$ is an affine linear function  of $X$.

We extend the polynomial $\bv_b$, originally defined on the edge/face   $e_i$, to the entire element $T$ by defining
$$
(\bv_b)_k(X)=(\bv_b)_k(Proj_{e_i} (X))=\sum_{|\alpha|\leq q}c_{\alpha}\textbf{t}(X)^{\alpha}, \qquad \forall k=1, \cdots, d
$$
where $\textbf{t}(X)^{\alpha}=t_1(X)^{\alpha_1}\cdots t_{d-1}(X)^{\alpha_{d-1}}$ and $\alpha=(\alpha_1, \cdots, \alpha_{d-1})$. Since $t_i(X)$ are linear functions of $X$, each term $\textbf{t}(X)^{\alpha}$ is a polynomial in $X=(x_1, \cdots, x_d)$.
Thus, $\bv_b(X)$ is a polynomial in the $d$-dimensional coordinates $X=(x_1, \cdots, x_d)$.

Similarly, let $\bv_{trace}$ denote the trace of $\bv_0$ on the edge/face  $e_i$. We extend $\bv_{trace}$   to the entire element $T$  using  the  formula:
$$
 \bv_{trace} (X)= \bv_{trace}(Proj_{e_i} (X)),
$$
where $X$ is any point in $T$, $Proj_{e_i} (X)$ denotes the projection of  $X$ onto the hyperplane $H$ containing  $e_i$. When  $Proj_{e_i} (X)$ is not on  $e_i$, $\bv_{trace}(Proj_{e_i} (X))$ is defined as the extension of $\bv_{trace}$ from $e_i$ to the hyperplane $H$.
As in the case of $\bv_b$, $\bv_{trace}$ remains a polynomial after this extension. 

Let $\bvarphi=(\bv_b-\bv_0)^T \bn\varphi_{e_i}$, where $\bn$ the unit outward normal vector to $e_i$. We have
\begin{equation*}
    \begin{split}
\|\bvarphi\|^2_T  =
\int_T \bvarphi^2dT =  &\int_T ((\bv_b-\bv_{trace}) ^T \bn\varphi_{e_i})^2dT\\
\leq &Ch_T \int_{e_i} ((\bv_b-\bv_{trace})^T  \bn\varphi_{e_i})^2ds\\
\leq &Ch_T \int_{e_i} |\bv_b-\bv_0|^2ds, 
    \end{split}
\end{equation*} 
where we used the following facts: (1) $\varphi_{e_i}=0$ on the edge/face $e_k$ for $k \neq i$; (2) there exists a subdomain $\widehat{e_i}\subset e_i$ such that $\varphi_{e_i}\geq \rho_1$ for some constant $\rho_1>0$, and applied the properties of the projection.

 This completes the proof of the lemma.

\end{proof}

\begin{remark}
Consider any $d$-dimensional polytopal element $T$. 
  There exists a hyperplane $H\subset R^d$  such that a finite number $l$ of distinct $(d-1)$-dimensional edges or faces, including $e_{i}$,   are completely contained within $H$. 
 For simplicity,   we assume $l=2$: there exists a   hyperplane $H\subset R^d$    such that two distinct $(d-1)$-dimensional edges or faces,  $e_i$ and $e_m$, are completely contained within $H$.
 The edge/face-based bubble function  is then  defined as
$$\varphi_{e_i}= \Pi_{k=1, \cdots, N, k\neq i, m}l_k^2(x).$$ 
It is easy to check $\varphi_{e_i} = 0$ on   $e_k$ for $k \neq i, m$.

In this case,  Lemma \ref{phi}  requires modification to handle the scenario. Specifically, the final part of its proof is revised as follows.

Let $\sigma$ be a polynomial function satisfying $\sigma \geq \alpha$ on $e_i$ for some constant $\alpha > 0$ and $|\sigma| \leq \epsilon$ on $e_m$ for a sufficiently small $\epsilon > 0$. Define $\bvarphi = (\bv_b - \bv_0)^T \bn \varphi_{e_i} \sigma$. Then:
\begin{equation*}
    \begin{split}
        \|\bvarphi\|^2_T =& \int_T \bvarphi^2 \, dT \\
        =& \int_T \left( (\bv_b - \bv_{trace}) ^T\bn \varphi_{e_i} \sigma \right)^2 \, dT \\
        \leq & C h_T \int_{e_i} \left( (\bv_b - \bv_{trace})^T  \bn \varphi_{e_i} \sigma \right)^2  ds \\
             & + C h_T \int_{e_m}  \left( (\tilde{\bv}_b - \tilde{\bv}_{trace}) ^T\bn \varphi_{e_i} \sigma \right)^2  ds \\
        \leq & C h_T \|(\bv_b - \bv_0)     \|^2_{e_i} 
              + C h_T \epsilon^2 \|(\tilde{\bv}_b - \tilde{\bv}_0)   \|^2_{e_m} \\
        \leq & C h_T \|(\bv_b - \bv_0)    \|^2_{e_i} 
              + C h_T \epsilon^2 \|(\tilde{\bv}_b - \tilde{\bv}_0)  \|^2_{e_i \cup e_m} \\
        \leq & C h_T \|(\bv_b - \bv_0)  \|^2_{e_i} 
              + C h_T \epsilon^2 \|(\bv_b - \bv_0) \|^2_{e_i  } \\
        \leq & C h_T \|(\bv_b - \bv_0)  \|^2_{e_i}.
    \end{split}
\end{equation*} 
Here,  $\tilde{\bv}_b$ and $\tilde{\bv}_{trace}$ denote extensions of $\bv_b$ and $\bv_{trace}$ from $e_i$ to $e_m$. The derivation uses the following key points: 
(1) $\varphi_{e_i} = 0$ on $e_k$ for $k \neq i, m$;
(2) There exists a subdomain $\widehat{e}_i \subset e_i$ such that $\varphi_{e_i} \geq \rho_1$ for some constant $\rho_1 > 0$; 
(3)  The domain inverse inequality;
(4)  The properties of the projection operators.
\end{remark}

\begin{lemma}\label{normeqva}   There exist  positive constants $C_1$ and $C_2$ such that for any $\bv=\{\bv_0, \bv_b\} \in V_h$,  the following norm equivalence holds: 
 \begin{equation}\label{normeq}
 C_1\|\bv\|_{1, h}\leq \3bar \bv\3bar  \leq C_2\|\bv\|_{1, h}.
\end{equation}
\end{lemma}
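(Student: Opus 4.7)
The plan is to prove the two inequalities separately, using identity \eqref{2.4new} with carefully chosen test tensors.

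For the upper bound $\3bar\bv\3bar \le C_2\|\bv\|_{1,h}$, I would work element by element: in \eqref{2.4new} take $\bvarphi = \nabla_w \bv \in [P_r(T)]^{d\times d}$, giving
\[
\|\nabla_w \bv\|_T^2 = (\nabla \bv_0,\nabla_w\bv)_T + \langle \bv_b - \bv_0,\, \nabla_w\bv \cdot \bn\rangle_{\partial T}.
\]
Applying Cauchy--Schwarz on both terms and the polynomial trace inequality \eqref{trace} to $\nabla_w\bv\cdot\bn$ yields
\[
\|\nabla_w\bv\|_T^2 \le \|\nabla \bv_0\|_T\|\nabla_w\bv\|_T + Ch_T^{-1/2}\|\bv_0-\bv_b\|_{\partial T}\|\nabla_w\bv\|_T,
\]
from which the upper bound follows by dividing through and summing over $T \in \T_h$.

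For the lower bound, the term $\|\nabla\bv_0\|_T \le C\|\nabla_w\bv\|_T$ is already delivered by Lemma \ref{norm1}, so the real content is controlling the jump term $h_T^{-1}\|\bv_0-\bv_b\|_{\partial T}^2$. I would do this edge-by-edge (face-by-face). Fix an edge/face $e_i \subset \partial T$ and choose a matrix-valued test tensor $\bvarphi \in [P_r(T)]^{d\times d}$ so that $\bvarphi\cdot\bn = (\bv_b-\bv_0)\,\varphi_{e_i}$ on $e_i$ and $\bvarphi\cdot\bn = 0$ on every other edge/face; the concrete choice is the outer product $\bvarphi = (\bv_b-\bv_0)\bn^T\varphi_{e_i}$, which vanishes on $e_k$ for $k\ne i$ by construction of the bubble $\varphi_{e_i}$. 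Plugging this into \eqref{2.4new} isolates the contribution of $e_i$:
\[
(\nabla_w\bv,\bvarphi)_T = (\nabla \bv_0,\bvarphi)_T + \int_{e_i} |\bv_b - \bv_0|^2 \,\varphi_{e_i}\,ds.
\]
Because $\bv_b-\bv_0$ is a polynomial on $e_i$ and $\varphi_{e_i}\ge\rho_1>0$ on the subdomain $\widehat{e_i}\subset e_i$, the domain inverse inequality gives $\|\bv_b-\bv_0\|_{e_i}^2 \le C\int_{e_i}|\bv_b-\bv_0|^2\varphi_{e_i}\,ds$. Then Cauchy--Schwarz, Lemma \ref{phi} to bound $\|\bvarphi\|_T^2 \le Ch_T\|\bv_b-\bv_0\|_{e_i}^2$, and Lemma \ref{norm1} to absorb $\|\nabla\bv_0\|_T$ into $\|\nabla_w\bv\|_T$ together yield
\[
\|\bv_b-\bv_0\|_{e_i}^2 \le Ch_T\|\nabla_w\bv\|_T^2.
\]
Summing over all edges/faces of $T$ and all $T \in \T_h$ gives $\sum_T h_T^{-1}\|\bv_0-\bv_b\|_{\partial T}^2 \le C\3bar\bv\3bar^2$, completing the lower bound.

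The main obstacle is the edge-by-edge localization step: the discrete weak gradient \eqref{2.4} couples the interior data $\bv_0$ and the boundary data $\bv_b$ globally on $\partial T$, so a naive test tensor contaminates the jump on $e_i$ with contributions from other edges/faces. The edge/face bubble $\varphi_{e_i}$ (together with the remark handling the degenerate case where two faces are coplanar, which replaces $\varphi_{e_i}$ by $\varphi_{e_i}\sigma$) is precisely what decouples these contributions, making the argument go through on arbitrary, possibly non-convex, polytopal elements. Everything else is a routine chain of Cauchy--Schwarz, the polynomial trace inequality \eqref{trace}, and domain inverse inequalities.
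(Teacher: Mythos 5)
Your proposal is correct and follows essentially the same route as the paper: the upper bound via $\bvarphi=\nabla_w\bv$, Cauchy--Schwarz, and the polynomial trace inequality \eqref{trace}; the lower bound via Lemma \ref{norm1} for the gradient part and the edge/face-bubble test tensor $(\bv_b-\bv_0)\bn^T\varphi_{e_i}$ in \eqref{2.4new}, combined with the domain inverse inequality on $e_i$ and Lemma \ref{phi}, to control the jump term. Your explicit outer-product reading of the test tensor and your remark about the coplanar-face modification match the paper's intent exactly.
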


\begin{proof}   
 Let $T$ be a polytopal element, which may be non-convex. Recall that an edge/face-based bubble function is defined as $$\varphi_{e_i}= \Pi_{k=1, \cdots, N, k\neq i}l_k^2(x).$$

We first extend $\bv_b$ from the edge/face $e_i$ to the element $T$. 
Let $\bv_{trace}$ denote the trace of $\bv_0$ on $e_i$ and extend $\bv_{trace}$ to $T$. For simplicity, these extensions are denoted as  $\bv_b$ and $\bv_0$. Details of the extensions can be found in Lemma \ref{phi}.

Choosing $\bvarphi=(\bv_b-\bv_0)^T \bn\varphi_{e_i}$ in \eqref{2.4new}, we obtain 
\begin{equation}\label{t3} 
 \begin{split}
  (\nabla_{w} \bv, \bvarphi)_T=&(\nabla \bv_0, \bvarphi)_T+
  {  \langle \bv_b-\bv_0,  \bvarphi\cdot \bn \rangle_{\partial T}} \\=&(\nabla \bv_0, \bvarphi)_T+ \int_{e_i} |\bv_b-\bv_0|^2 \varphi_{e_i}ds,   
 \end{split}
  \end{equation} 
   where we used (1) $\varphi_{e_i}=0$ on  $e_k$ for $k \neq i$, 
(2) there exists a subdomain $\widehat{e_i}\subset e_i$ such that $\varphi_{e_i}\geq \rho_1$ for some constant $\rho_1>0$.

 Applying the  Cauchy-Schwarz inequality, \eqref{t3}, the domain inverse inequality \cite{wy3655}, and  Lemma \ref{phi}, we obtain
\begin{equation*}
\begin{split}
 \int_{e_i}|\bv_b-\bv_0|^2  ds\leq &C  \int_{e_i}|\bv_b-\bv_0|^2  \varphi_{e_i}ds \\
 \leq & C(\|\nabla_w \bv\|_T+\|\nabla \bv_0\|_T)\| \bvarphi\|_T\\
 \leq & {Ch_T^{\frac{1}{2}} (\|\nabla_w \bv\|_T+\|\nabla \bv_0\|_T) (\int_{e_i}(|\bv_0-\bv_b|^2ds)^{\frac{1}{2}}}.
 \end{split}
\end{equation*}
Using  Lemma \ref{norm1}, we derive
$$
 h_T^{-1}\int_{e_i}|\bv_b-\bv_0|^2  ds \leq C  (\|\nabla_w \bv\|^2_T+\|\nabla \bv_0\|^2_T)\leq C\|\nabla_w \bv\|^2_T.
$$
Thus, combining this with Lemma \ref{norm1},  \eqref{3norm}, and \eqref{disnorm}, we establish
$$
 C_1\|\bv\|_{1, h}\leq \3bar \bv\3bar.
$$

Next, using \eqref{2.4new}, the Cauchy-Schwarz inequality,  and  the trace inequality \eqref{trace}, we have
$$
 \Big|(\nabla_{w} \bv, \bvarphi)_T\Big| \leq \|\nabla \bv_0\|_T \|  \bvarphi\|_T+
Ch_T^{-\frac{1}{2}}\|\bv_b-\bv_0\|_{\partial T} \| \bvarphi\|_{T},
$$
which implies
$$
\| \nabla_{w} \bv\|_T^2\leq C( \|\nabla \bv_0\|^2_T  +
 h_T^{-1}\|\bv_b-\bv_0\|^2_{\partial T}),
$$
 and thus $$ \3bar \bv\3bar  \leq C_2\|\bv\|_{1, h}.$$

 This completes the proof.
 \end{proof}

  \begin{remark}
   If the polytopal element $T$ is convex, 
  the edge/face-based bubble function in Lemma \ref{normeqva} simplifies to
$$\varphi_{e_i}= \Pi_{k=1, \cdots, N, k\neq i}l_k(x).$$
It can be verified that: (1)  $\varphi_{e_i}=0$ on $e_k$ for $k \neq i$; (2) there exists a subdomain $\widehat{e_i}\subset e_i$ such that $\varphi_{e_i}\geq \rho_1$ for some constant $\rho_1>0$. 

Lemma \ref{normeqva} can be proved in the same manner using this simplified construction. 
\end{remark}

\begin{remark}
Consider a  $d$-dimensional polytopal element $T$. 
  There exists a hyperplane $H\subset R^d$  such that a finite number $l$ of distinct $(d-1)$-dimensional edges or faces containing $e_{i}$ are completely contained within $H$. 
 For simplicity,   we  focus on the case $l=2$: there exists a   hyperplane $H\subset R^d$    such that distinct $(d-1)$-dimensional edges or faces $e_i$ and $e_m$ are  fully  contained within $H$.
 The edge/face-based bubble function is then defined as
$$\varphi_{e_i}= \Pi_{k=1, \cdots, N, k\neq i, m}l_k^2(x),$$ 
where  $\varphi_{e_i} = 0$ on edges or faces $e_k$ for $k \neq i, m$. 
 
For this scenario, Lemma \ref{normeqva} requires a revision. Below is the updated proof for the relevant part of the lemma.

 Let $\sigma$ be a polynomial function such that $\sigma \geq \alpha$ on $e_i$ for some constant $\alpha>0$ and $|\sigma| \leq \epsilon$ on $e_m$ for a sufficiently small $\epsilon>0$. Substituting $\bvarphi=(\bv_b-\bv_0)^T \bn\varphi_{e_i}\sigma$ into  \eqref{2.4new} yields
 \begin{equation}\label{t3-1}  
     \begin{split}
    &  (\nabla_{w} \bv, \bvarphi)_T\\ =&(\nabla \bv_0, \bvarphi)_T+
  {  \langle \bv_b-\bv_0,  \bvarphi\cdot \bn \rangle_{\partial T}} \\ =&(\nabla \bv_0, \bvarphi)_T+ \int_{e_{i}}|\bv_b-\bv_0|^2  \varphi_{e_i}\sigma ds+\int_{e_{m}}(\bv_b-\bv_0) (\tilde{\bv}_b-\tilde{\bv}_0)  \varphi_{e_i}\sigma ds\\
  \geq &(\nabla \bv_0, \bvarphi)_T+ \int_{e_{i}}|\bv_b-\bv_0|^2  \varphi_{e_i}\alpha  ds+\int_{e_{m}}(\bv_b-\bv_0)  (\tilde{\bv}_b-\tilde{\bv}_0)  \varphi_{e_i}\sigma ds, 
     \end{split}
  \end{equation}
   where 
  $\tilde{\bv}_b$ and $\tilde{\bv}_0$ are  extensions of $\bv_b$ and $\bv_0$  from  $e_i$ to $e_m$.

Using the Cauchy-Schwarz inequality, \eqref{t3-1}, the domain inverse inequality \cite{wy3655}, and  Lemma \ref{phi}, we obtain
\begin{equation}\label{s1}
\begin{split}
 &\int_{e_i}|\bv_b-\bv_0|^2  ds\\\leq &C  \int_{e_i}|\bv_b-\bv_0|^2  \varphi_{e_i}\alpha ds \\
 \leq & C(\|\nabla_w \bv\|_T+\|\nabla \bv_0\|_T)\| \bvarphi\|_T +  \Big|\int_{e_{m}}(\bv_b-\bv_0)  (\tilde{\bv}_b-\tilde{\bv}_0)  \varphi_{e_i}\sigma ds\Big|\\
 \leq & Ch_T^{\frac{1}{2}} (\|\nabla_w \bv\|_T+\|\nabla v_0\|_T)  \|\bv_0-\bv_b\|_{e_i} +  C\epsilon \|\bv_b-\bv_0\|_{e_m} \| \tilde{\bv}_b-\tilde{\bv}_0\|_{e_m}\\
 \leq & Ch_T^{\frac{1}{2}} (\|\nabla_w \bv\|_T+\|\nabla v_0\|_T)  \|\bv_0-\bv_b\|_{e_i} +  C\epsilon \|\bv_b-\bv_0\|_{e_m} \| \tilde{\bv}_b-\tilde{\bv}_0\|_{e_m\cup e_i}\\
  \leq & Ch_T^{\frac{1}{2}} (\|\nabla_w \bv\|_T+\|\nabla \bv_0\|_T)  \|\bv_0-\bv_b\|_{e_i} +  C\epsilon \|\bv_b-\bv_0\|_{e_m} \| \bv_b-\bv_0\|_{e_i}.
 \end{split}
\end{equation}
This implies
$$
\| \bv_b-\bv_0\|_{e_i} \leq Ch_T^{\frac{1}{2}} (\|\nabla_w \bv\|_T+\|\nabla \bv_0\|_T)   +  C\epsilon \|\bv_b-\bv_0\|_{e_m}.
$$

 Analogous to \eqref{s1}, we derive
\begin{equation}\label{s2}
    \| \bv_b-\bv_0\|_{e_m} \leq Ch_T^{\frac{1}{2}} (\|\nabla_w \bv\|_T+\|\nabla \bv_0\|_T)   +  C\epsilon \|\bv_b-\bv_0\|_{e_i}.
\end{equation}

Substituting \eqref{s2} into \eqref{s1}, we get
$$
\| \bv_b-\bv_0\|_{e_i} \leq Ch_T^{\frac{1}{2}} (1+\epsilon)(\|\nabla_w \bv\|_T+\|\nabla \bv_0\|_T)   + C \epsilon^2 \|\bv_b-\bv_0\|_{e_i}. 
$$
Reorganizing terms gives
$$
(1-\epsilon^2)\| \bv_b-\bv_0\|_{e_i} \leq Ch_T^{\frac{1}{2}} (1+\epsilon)(\|\nabla_w \bv\|_T+\|\nabla \bv_0\|_T). 
$$
Thus, we have
$$
\| \bv_b-\bv_0\|_{e_i} \leq Ch_T^{\frac{1}{2}} (\|\nabla_w \bv\|_T+\|\nabla \bv_0\|_T). 
$$
 \end{remark}

 \begin{lemma} \cite{yewang}
     There exists a constant $\alpha>0$,  independent of $h$,  such that for all $\rho\in W_h$, 
     \begin{equation}\label{infsup}
         \sup_{\bv\in V_h^0} \frac{(\nabla_w \cdot \bv, \rho)}{\3bar \bv\3bar}\geq \alpha \|\rho\|.
     \end{equation}
 \end{lemma}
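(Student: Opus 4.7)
The plan is to apply the classical Fortin projection argument together with the continuous inf-sup condition for the Stokes system. Given $\rho \in W_h \subset L_0^2(\Omega)$, the Ladyzhenskaya--Babuska--Brezzi theory provides $\tilde{\bu} \in [H_0^1(\Omega)]^d$ with $\nabla \cdot \tilde{\bu} = \rho$ and $\|\tilde{\bu}\|_1 \leq C \|\rho\|$. I would then use $\bv = Q_h \tilde{\bu}$ as the test function in the supremum; observe that $\bv \in V_h^0$, since $\tilde{\bu}$ has vanishing trace on $\partial\Omega$, so $Q_b \tilde{\bu} = 0$ there.

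The first key step is a commuting-diagram identity: for every $T \in \T_h$ and every $q \in P_{k-1}(T)$,
\begin{equation*}
(\nabla_w \cdot Q_h \tilde{\bu}, q)_T = (\nabla \cdot \tilde{\bu}, q)_T.
\end{equation*}
This follows directly from the definition \eqref{div}: since $\nabla q \in [P_{k-2}(T)]^d \subset [P_k(T)]^d$ and $q|_e \in P_{k-1}(e) \subset P_k(e)$, the $L^2$-projection properties of $Q_0$ and $Q_b$ permit replacing $Q_0 \tilde{\bu}$ by $\tilde{\bu}$ and $Q_b \tilde{\bu}$ by $\tilde{\bu}$, after which ordinary integration by parts produces $(\nabla \cdot \tilde{\bu}, q)_T$. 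Summing over $T$ and choosing $q = \rho$ gives
\begin{equation*}
(\nabla_w \cdot Q_h \tilde{\bu}, \rho) \;=\; (\nabla \cdot \tilde{\bu}, \rho) \;=\; \|\rho\|^2.
\end{equation*}

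The second key step is the $H^1$-type stability estimate $\3bar Q_h \tilde{\bu} \3bar \leq C \|\tilde{\bu}\|_1$. By the norm equivalence of Lemma \ref{normeqva}, it suffices to bound $\|Q_h \tilde{\bu}\|_{1,h}$. The interior piece $\|\nabla Q_0 \tilde{\bu}\|_T$ is handled by the $H^1$-stability of the local $L^2$ projection onto $[P_k(T)]^d$, while the jump piece $h_T^{-1/2}\|Q_0 \tilde{\bu} - Q_b \tilde{\bu}\|_{\partial T}$ is handled by inserting and subtracting $\tilde{\bu}$ and invoking the standard trace-and-approximation inequality $\|\tilde{\bu} - Q_0 \tilde{\bu}\|_{\partial T} + \|\tilde{\bu} - Q_b \tilde{\bu}\|_{\partial T} \leq C h_T^{1/2} \|\tilde{\bu}\|_{1,T}$ together with the trace inequality \eqref{tracein}.

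Combining the two steps produces
\begin{equation*}
\sup_{\bv \in V_h^0} \frac{(\nabla_w \cdot \bv, \rho)}{\3bar \bv \3bar} \;\geq\; \frac{(\nabla_w \cdot Q_h \tilde{\bu}, \rho)}{\3bar Q_h \tilde{\bu} \3bar} \;\geq\; \frac{\|\rho\|^2}{C \|\tilde{\bu}\|_1} \;\geq\; \alpha \|\rho\|,
\end{equation*}
with $\alpha > 0$ independent of $h$. The main obstacle I anticipate is the $H^1$-stability of $Q_h$ on rough, possibly non-convex polytopal elements; Lemma \ref{normeqva} is precisely what allows one to bypass a direct analysis of $\3bar Q_h \tilde{\bu} \3bar$ in favor of the more transparent piecewise-$H^1$ quantity $\|Q_h \tilde{\bu}\|_{1,h}$, where standard shape-regular trace/approximation machinery applies.
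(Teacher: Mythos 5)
Your proposal is correct. Note that the paper does not prove this lemma at all---it is quoted directly from \cite{yewang}---and the argument you give is precisely the standard Fortin-type argument used there: take the $H^1_0$ right inverse of the divergence from the continuous inf-sup theory, test with $Q_h\tilde{\bu}$, use the commuting identity $\nabla_w\cdot Q_h\tilde{\bu}={\cal Q}_h(\nabla\cdot\tilde{\bu})$ (which the paper itself establishes as \eqref{pro3} in Lemma \ref{Lemma5.1}), and bound $\3bar Q_h\tilde{\bu}\3bar$ via the norm equivalence of Lemma \ref{normeqva} together with trace and approximation estimates. The only point worth flagging is that in the present auto-stabilized setting the norm equivalence (rather than a stabilizer term) is what makes the stability bound $\3bar Q_h\tilde{\bu}\3bar\leq C\|\tilde{\bu}\|_1$ go through on non-convex elements, and you have invoked exactly that lemma, so the argument is complete.
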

\begin{theorem}
The  Auto-Stabilized WG Algorithm  \ref{PDWG1} has  a unique solution. 
\end{theorem}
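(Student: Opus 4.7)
The plan is to exploit the fact that Algorithm \ref{PDWG1} is a square finite-dimensional linear system, so it suffices to prove uniqueness: if $\bf\equiv 0$, then $\bu_h=0$ and $p_h=0$. Once uniqueness is established, existence follows immediately by dimension count.

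First I would handle the velocity. Setting $\bf=0$ and choosing the test pair $\bv_h=\bu_h\in V_h^0$ and $q_h=p_h\in W_h$ in \eqref{WG}, the pressure-coupling terms cancel after adding the two equations, leaving $(\nabla_w\bu_h,\nabla_w\bu_h)=0$, i.e.\ $\3bar\bu_h\3bar=0$. By the norm equivalence in Lemma \ref{normeqva}, this forces $\|\bu_h\|_{1,h}=0$, which by \eqref{disnorm} means $\nabla\bu_0=0$ on every element $T$ and $\bu_0=\bu_b$ on each $\partial T$. Hence $\bu_0$ is piecewise constant, and the single-valuedness of $\bu_b$ across interior edges/faces (built into the definition of $V_h$) propagates this constant across $\Omega$; the boundary condition $\bu_b|_{\partial\Omega}=0$ then forces the constant to be zero, so $\bu_0\equiv 0$ and $\bu_b\equiv 0$.

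Next I would recover the pressure using the discrete inf-sup condition \eqref{infsup}. With $\bu_h=0$, the first equation of \eqref{WG} reduces to $(\nabla_w\cdot\bv_h,p_h)=0$ for every $\bv_h\in V_h^0$. Applying \eqref{infsup} with $\rho=p_h$ gives
\begin{equation*}
\alpha\|p_h\|\le \sup_{\bv_h\in V_h^0}\frac{(\nabla_w\cdot\bv_h,p_h)}{\3bar\bv_h\3bar}=0,
\end{equation*}
so $p_h=0$. This establishes uniqueness, and hence existence and uniqueness of the solution to Algorithm \ref{PDWG1}.

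I do not anticipate a real obstacle here: all the heavy lifting has already been done in the norm-equivalence Lemma \ref{normeqva} (which is where the non-convex polytopal bubble argument is essential) and in the inf-sup Lemma from \cite{yewang}. The only subtlety is to argue cleanly that $\nabla\bu_0=0$ on each $T$ together with $\bu_0=\bu_b$ on $\partial T$ and continuity of $\bu_b$ across $\mathcal{E}_h^0$ indeed yields a globally constant $\bu_0$; this is a direct connectivity argument on the mesh and should be stated in one or two sentences rather than belabored.
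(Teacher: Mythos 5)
Your proposal is correct and follows essentially the same route as the paper: test the homogeneous system with $\bv_h=\bu_h$, $q_h=p_h$ to get $\3bar\bu_h\3bar=0$, invoke the norm equivalence of Lemma \ref{normeqva} and the mesh-connectivity argument to conclude $\bu_h\equiv 0$, then use the inf-sup condition \eqref{infsup} to kill $p_h$. The only cosmetic difference is that the paper phrases the argument as the difference of two putative solutions rather than the homogeneous problem, and leaves the existence-from-uniqueness (square linear system) step implicit, which you make explicit.
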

\begin{proof}
Assume that $(\bu_h^{(1)}, p_h^{(1)})\in V_h^0\times W_h$ and $(\bu_h^{(2)}, p_h^{(2)})\in V_h^0\times W_h$  are two  distinct solutions of the Auto-Stabilized WG scheme \ref{PDWG1}. Define  ${\cal D}_{\bu_h}= \{{\cal D}_{\bu_0}, {\cal D}_{\bu_b}\}=\bu_h^{(1)}-\bu_h^{(2)}\in V_h^0$ and ${\cal D}_{p_h}=p_h^{(1)}-p_h^{(2)}\in W_h$. The pair ${\cal D}_{\bu_h}$  and ${\cal D}_{p_h}$ satisfies
\begin{equation}\label{wgstokes}
\begin{split}
  (\nabla_w {\cal D}_{\bu_h}, \nabla_w \bv_h)-(\nabla_w \cdot\bv_h, {\cal D}_{p_h})=&0, \qquad \qquad \forall \bv_h\in V_h^0,\\
     (\nabla_w \cdot {\cal D}_{\bu_h}, q_h)=&0, \qquad\qquad \forall q_h\in W_h.
\end{split}
\end{equation}
Choosing $\bv_h={\cal D}_{\bu_h}$ and $q_h={\cal D}_{p_h}$ in \eqref{wgstokes} yields $\3bar {\cal D}_{\bu_h} \3bar=0$. From the equivalence of the norm in \eqref{normeq},   it follows that $\|{\cal D}_{\bu_h}\|_{1,h}=0$, which  implies:
1) $\nabla {\cal D}_{\bu_0}=0$ on each $T$,  and 2) ${\cal D}_{\bu_0}={\cal D}_{\bu_b}$ on each $\partial T$.  Since  $\nabla {\cal D}_{\bu_0}=0$ on each $T$,  
${\cal D}_{\bu_0}$ is a constant within each $T$.  Combining this with ${\cal D}_{\bu_0}={\cal D}_{\bu_b}$ on $\partial T$, we conclude that     ${\cal D}_{\bu_0}$  is a constant throughout the domain $\Omega$. Furthermore, since ${\cal D}_{\bu_0}=0$ on $\partial\Omega$,  it follows that ${\cal D}_{\bu_0}\equiv 0$ in   $\Omega$. Using ${\cal D}_{\bu_0}={\cal D}_{\bu_b}$ on $\partial T$, we deduce ${\cal D}_{\bu_b}\equiv 0$, and thus ${\cal D}_{\bu_h}\equiv 0$ in $\Omega$. 
  Substituting ${\cal D}_{\bu_h}\equiv 0$ into the first equation of 
\eqref{wgstokes} gives 
$$
(\nabla_w\cdot \bv_h, {\cal D}_{p_h})=0
$$
for any $\bv_h\in V_h$. By the inf-sup condition \eqref{infsup}, this implies 
$\|{\cal D}_{p_h}\|=0$, and hence   $ {\cal D}_{p_h} \equiv  0$ in $\Omega$.

Thus, we conclude that
 $\bu_h^{(1)}\equiv \bu_h^{(2)}$ and $p_h^{(1)}\equiv p_h^{(2)}$, proving the uniqueness of the solution.
\end{proof}

\section{Error Equations}
Let ${\cal Q}_h$ denote the $L^2$ projection operator onto the finite element space of piecewise polynomials of degree at most $2N+k-1$ for non-convex element and $N+k-1$ for convex element in the finite element partitions.  

\begin{lemma}\label{Lemma5.1}   For  $\bu\in [H^1(T)]^d$, the following property holds:
\begin{equation}\label{pro1}
\nabla_{w}\bu ={\cal Q}_h(\nabla \bu), 
\end{equation}
\begin{equation}\label{pro2}
\nabla_{w} \cdot \bu ={\cal Q}_h(\nabla \cdot \bu),  
\end{equation}
\begin{equation}\label{pro3}
\nabla_w\cdot Q_h\bu={\cal Q}_h(\nabla \cdot \bu).
\end{equation}
\end{lemma}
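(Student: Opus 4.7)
The plan is to prove the three identities by unwinding the definitions of the discrete weak operators from \eqref{2.4} and \eqref{div} and exploiting the defining $L^2$-orthogonality of the projections $\mathcal{Q}_h$, $Q_0$, and $Q_b$. Because $\bu \in [H^1(T)]^d$ is a classical function, I identify it with the weak function $\{\bu|_T,\bu|_{\partial T}\}$ so that the trace pieces agree on $\partial T$, which is what will let me move freely between weak and classical derivatives via integration by parts.

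For \eqref{pro1}, I would fix an arbitrary $\bvarphi \in [P_r(T)]^{d\times d}$ and apply \eqref{2.4new}. Since the trace of the smooth $\bu$ matches $\bv_b$, the correction term $\langle \bv_b-\bv_0,\bvarphi\cdot\bn\rangle_{\partial T}$ vanishes and the identity collapses to $(\nabla_w\bu,\bvarphi)_T=(\nabla\bu,\bvarphi)_T$. The defining property of the $L^2$ projection gives $(\mathcal{Q}_h(\nabla\bu),\bvarphi)_T=(\nabla\bu,\bvarphi)_T$ on the same test space. Since both $\nabla_w\bu$ and $\mathcal{Q}_h(\nabla\bu)$ live in $[P_r(T)]^{d\times d}$ and pair identically with every element of that space, they coincide. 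Property \eqref{pro2} is the scalar analogue and follows by the identical argument with \eqref{divnew} in place of \eqref{2.4new}.

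For \eqref{pro3}, the input to the weak divergence is the projected weak function $Q_h\bu=\{Q_0\bu,Q_b\bu\}$, so I cannot integrate by parts directly on $\bu$. Starting from \eqref{div} I would write
\[
(\nabla_w\cdot Q_h\bu,w)_T = -(Q_0\bu,\nabla w)_T + \langle Q_b\bu\cdot\bn,w\rangle_{\partial T},
\]
and then invoke the defining orthogonalities $(Q_0\bu-\bu,\phi)_T=0$ for $\phi\in[P_k(T)]^d$ and $\langle Q_b\bu-\bu,\psi\rangle_e=0$ for $\psi\in[P_k(e)]^d$ to replace $Q_0\bu$ by $\bu$ in the volume integral and $Q_b\bu$ by $\bu$ on the boundary. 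Integrating by parts in the resulting expression and appealing to the $L^2$ projection property of $\mathcal{Q}_h$ then yields $(\nabla\cdot\bu,w)_T=(\mathcal{Q}_h(\nabla\cdot\bu),w)_T$, and the uniqueness argument used for \eqref{pro1} finishes the job.

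The main obstacle is a bookkeeping subtlety: the two substitutions in the argument for \eqref{pro3} require $\nabla w$ to lie in the range $[P_k(T)]^d$ of $Q_0$ and each edge restriction $w|_e$ to lie in the range $[P_k(e)]^d$ of $Q_b$. Both conditions are automatic when $w$ is drawn from the pressure space $P_{k-1}(T)$ (so $\nabla w\in[P_{k-2}(T)]^d\subset[P_k(T)]^d$ and $w|_e\in P_{k-1}(e)\subset P_k(e)$), which is precisely the regime in which \eqref{pro3} is used in the saddle-point analysis; on the richer test space associated with the weak gradient the identity must be read on its pressure-space component. Everything else in the lemma is routine projection-and-integration-by-parts bookkeeping.
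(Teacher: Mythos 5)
Your argument for \eqref{pro1} and \eqref{pro2} is exactly the paper's: identify the smooth $\bu$ with the weak function whose boundary component is its own trace, so the jump term in \eqref{2.4new} (resp.\ \eqref{divnew}) vanishes, then use the defining orthogonality of ${\cal Q}_h$ and the fact that two elements of $[P_r(T)]^{d\times d}$ (resp.\ $P_r(T)$) pairing identically against that space must coincide. For \eqref{pro3} you also follow the paper's route---start from \eqref{div}, replace $Q_0\bu$ by $\bu$ in the volume term and $Q_b\bu$ by $\bu$ on the boundary, integrate by parts---but you are more careful than the paper at the one step that is delicate: those replacements are legitimate only when $\nabla w$ lies in the range $[P_k(T)]^d$ of $Q_0$ and $w|_e$ lies in the range $P_k(e)$ of $Q_b$, whereas the paper asserts the chain of equalities ``for all $w\in P_r(T)$'' with $r=2N+k-1>k+1$, for which neither containment holds in general. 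Your restriction to test functions in the pressure space $P_{k-1}(T)$ is the defensible form of the identity and is all that the second equation of the scheme requires. The one thing you should flag if you adopt this weaker reading is that the paper later uses \eqref{pro3} (e.g.\ in \eqref{ss2} and \eqref{add}) paired against $p-p_h$, which is not a piecewise polynomial of degree $k-1$; under your formulation that step needs the extra observation that $p$ may first be replaced by its $L^2$ projection onto $P_{k-1}(T)$ there, so the downstream uses remain valid but require a sentence of justification.
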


\begin{proof} For any $\bu\in [H^1(T)]^d$, using \eqref{2.4new}, we have  
 \begin{equation*} 
  \begin{split}
 &(\nabla_w \bu, \bv)_T\\
  =&(\nabla \bu,  \bv)_T+
  \langle \bu|_{\partial T}-\bu|_T, \bv \cdot\bn \rangle_{\partial T} \\
  =&(\nabla \bu,  \bv)_T =({\cal Q}_h\nabla \bu,  \bv)_T ,
   \end{split}
   \end{equation*} 
  for all $\bv\in [P_r(T)]^{d\times d}$.  

For any $\bu\in [H^1(T)]^d$, using \eqref{divnew}, we have  
  \begin{equation*} 
  \begin{split}
 &(\nabla_w \cdot \bu, w)_T\\
  =&(\nabla \cdot \bu,  w)_T+
  \langle (\bu|_{\partial T}-\bu|_T) \cdot\bn, w  \rangle_{\partial T} \\
  =&(\nabla \cdot \bu,  w)_T =({\cal Q}_h\nabla \cdot \bu,  w)_T,
   \end{split}
   \end{equation*} 
  for all $w\in  P_r(T)$.  

Finally, for  any $\bu\in [H^1(T)]^d$, using \eqref{div}, we have  
  \begin{equation*} 
  \begin{split}
 &(\nabla_w \cdot Q_h\bu, w)_T\\
  =&-(Q_0\bu, \nabla w)_T+
  \langle  Q_b\bu \cdot\bn, w  \rangle_{\partial T} \\
  =&-( \bu, \nabla w)_T+
  \langle \bu \cdot\bn, w  \rangle_{\partial T} \\
  =&(\nabla \cdot \bu,  w)_T =({\cal Q}_h\nabla \cdot \bu,  w)_T,
   \end{split}
   \end{equation*} 
  for all $w\in  P_r(T)$.  
  
  This  completes the proof.

  \end{proof}

Let  $\bu$ and $p$ be the exact solutions of the Stokes equations \eqref{model}, and let  
  $\bu_h \in V_{h}^0$ and $p_h\in W_h$ denote their numerical approximations obtained from the WG scheme  \ref{PDWG1}. The error functions, denoted as  $e_{\bu_h}$ and $e_{p_h}$, are defined as 
\begin{equation}\label{error} 
e_{\bu_h}=\bu-\bu_h,  \qquad e_{p_h}=p-p_h.
\end{equation}

\begin{lemma}\label{errorequa}
The error functions $e_{\bu_h}$ and $e_{p_h}$ defined in  \eqref{error}  satisfy  the following error equations:
\begin{equation}\label{erroreqn}
\begin{split}
 (\nabla_{w} e_{\bu_h}, \nabla_w  \bv_h) -(\nabla_w \cdot \bv_h, e_{p_h})  =&\ell_1 (\bu, \bv_h)+\ell_2 (  \bv_h, p),  \qquad \forall \bv_h\in V_h^0, \\(\nabla_w \cdot e_{\bu_h}, q_h)=&0, \qquad \forall q_h\in W_h,
\end{split}
\end{equation}
where 
$$
\ell_1 (\bu, \bv_h)=\sum_{T\in {\cal T}_h}  
  \langle \bv_b-\bv_0,   ({\cal Q}_h-I) \nabla \bu \cdot \bn \rangle_{\partial T},
$$ 
$$
\ell_2 (  \bv_h, p)=\sum_{T\in {\cal T}_h} -\langle ({\cal Q}_h -I)p, (\bv_b-\bv_0)\cdot \bn \rangle_{\partial T}.
$$
\end{lemma}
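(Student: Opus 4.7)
The plan is to test both the exact Stokes system \eqref{model} and the WG scheme \eqref{WG} against the same discrete pair $(\bv_h, q_h)$ and then subtract. The second error equation comes almost for free: since $\nabla \cdot \bu = 0$, the identity \eqref{pro2} of Lemma \ref{Lemma5.1} gives $\nabla_w \cdot \bu = \mathcal{Q}_h(\nabla \cdot \bu) = 0$ on each element, and pairing this with the second line of \eqref{WG} yields $(\nabla_w \cdot e_{\bu_h}, q_h) = 0$.

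For the momentum equation, I would first multiply $-\Delta \bu + \nabla p = \bf$ by $\bv_0$, integrate over each $T$, and apply integration by parts to both terms. Summing over $T \in \T_h$ gives $\sum_T [(\nabla \bu, \nabla \bv_0)_T - (p, \nabla \cdot \bv_0)_T] = (\bf, \bv_0) + \sum_T [\langle \nabla \bu \cdot \bn, \bv_0\rangle_{\partial T} - \langle p, \bv_0 \cdot \bn\rangle_{\partial T}]$. Because $\bu$ and $p$ are globally smooth while $\bv_b$ is single-valued with $\bv_b|_{\partial\Omega} = 0$, the analogous sums with $\bv_0$ replaced by $\bv_b$ both vanish (the normal flips sign across interior faces, cancelling the smooth data). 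Subtracting these zero sums effectively replaces $\bv_0$ by $\bv_0 - \bv_b$ inside the boundary integrals.

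Next, I would rewrite the WG bilinear forms evaluated at the exact solution in a compatible element form. Using Lemma \ref{Lemma5.1} to identify $\nabla_w \bu$ with $\mathcal{Q}_h \nabla \bu$, and then applying \eqref{2.4new} with $\bvarphi = \mathcal{Q}_h \nabla \bu \in [P_r(T)]^{d\times d}$ together with the fact that $\mathcal{Q}_h$ acts as the identity on the polynomial $\nabla \bv_0$, produces $(\nabla_w \bu, \nabla_w \bv_h)_T = (\nabla \bu, \nabla \bv_0)_T + \langle \bv_b - \bv_0, \mathcal{Q}_h \nabla \bu \cdot \bn\rangle_{\partial T}$. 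An analogous computation based on \eqref{divnew} gives $(\nabla_w \cdot \bv_h, p)_T = (\nabla \cdot \bv_0, p)_T + \langle (\bv_b - \bv_0) \cdot \bn, \mathcal{Q}_h p\rangle_{\partial T}$.

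Substituting these two identities into the integration-by-parts expression from the previous paragraph and then subtracting the first line of \eqref{WG} causes the $(\bf, \bv_0)$ data and the interior integrals $(\nabla \bu, \nabla \bv_0)$, $(p, \nabla \cdot \bv_0)$ to recombine on the left into $(\nabla_w e_{\bu_h}, \nabla_w \bv_h) - (\nabla_w \cdot \bv_h, e_{p_h})$. The residual boundary pieces are exactly the difference between the exact boundary data $\nabla \bu \cdot \bn$, $p$ and their projected counterparts $\mathcal{Q}_h \nabla \bu \cdot \bn$, $\mathcal{Q}_h p$, and they reassemble cleanly into $\ell_1(\bu, \bv_h) + \ell_2(\bv_h, p)$. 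The principal obstacle is the sign and projection bookkeeping in this last step: one must track which boundary integrals carry which sign under \eqref{2.4new} versus \eqref{divnew}, and confirm that the $\mathcal{Q}_h - I$ structure emerges cleanly from differencing the exact boundary contributions (from the PDE identity) with the projected ones (from the weak operators).
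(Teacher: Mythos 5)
Your proposal is correct and follows essentially the same route as the paper's proof: both rest on Lemma \ref{Lemma5.1} together with \eqref{2.4new} and \eqref{divnew} applied to the projected data $\mathcal{Q}_h\nabla\bu$ and $\mathcal{Q}_h p$, elementwise integration by parts, and the cancellation of the single-valued $\bv_b$ terms across interior faces before subtracting the scheme \eqref{WG}. The only difference is organizational --- you integrate the PDE by parts first and then match it to the weak bilinear forms, whereas the paper expands $(\nabla_w\bu,\nabla_w\bv_h)$ and $(\nabla_w\cdot\bv_h,p)$ first and arrives at the PDE terms --- which is the same computation read in the opposite direction.
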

\begin{proof} Using \eqref{pro1}, standard  integration by parts, and setting $\bvarphi= {\cal Q}_h \nabla \bu$ in  \eqref{2.4new}, we obtain 
\begin{equation}\label{term1}
\begin{split}
&\sum_{T\in {\cal T}_h} (\nabla_{w} \bu, \nabla_w  \bv_h)_T\\=&\sum_{T\in {\cal T}_h}\sum_{T\in {\cal T}_h} ({\cal Q}_h(\nabla \bu), \nabla_w  \bv_h)_T\\ 
=&\sum_{T\in {\cal T}_h}   (\nabla \bv_0,  {\cal Q}_h \nabla \bu)_T+
  \langle \bv_b-\bv_0,   {\cal Q}_h \nabla \bu \cdot \bn \rangle_{\partial T}\\
=& \sum_{T\in {\cal T}_h}(\nabla \bv_0,   \nabla \bu)_T+
  \langle \bv_b-\bv_0,   {\cal Q}_h \nabla \bu \cdot \bn \rangle_{\partial T}\\
  =& \sum_{T\in {\cal T}_h}-(  \bv_0,   \Delta \bu)_T+\langle \nabla \bu\cdot\bn, \bv_0\rangle_{\partial T}+
  \langle \bv_b-\bv_0,   {\cal Q}_h \nabla \bu \cdot \bn \rangle_{\partial T}\\
  =& \sum_{T\in {\cal T}_h}-(  \bv_0,   \Delta \bu)_T+ 
  \langle \bv_b-\bv_0,   ({\cal Q}_h-I) \nabla \bu \cdot \bn \rangle_{\partial T},
\end{split}
\end{equation}
where we used $\sum_{T\in {\cal T}_h} \langle \nabla \bu\cdot\bn, \bv_b\rangle_{\partial T}=\langle \nabla \bu\cdot\bn, \bv_b\rangle_{\partial \Omega}=0$ since  $\bv_b=0$ on $\partial \Omega$.

Using standard integration by parts and setting  $w={\cal Q}_h  p$ in  \eqref{divnew}, we obtain 
 \begin{equation}\label{termm}
     \begin{split}
  &\sum_{T\in {\cal T}_h} (\nabla_w \cdot \bv_h, p)_T\\=&
  \sum_{T\in {\cal T}_h} (\nabla_w \cdot \bv_h, {\cal Q}_h p)_T \\=& \sum_{T\in {\cal T}_h} (\nabla \cdot \bv_0,  {\cal Q}_h p)_T + \langle {\cal Q}_h p, (\bv_b-\bv_0)\cdot \bn \rangle_{\partial T}\\
  =& \sum_{T\in {\cal T}_h} (\nabla \cdot \bv_0,    p)_T + \langle {\cal Q}_h p, (\bv_b-\bv_0)\cdot \bn \rangle_{\partial T}\\
  =& \sum_{T\in {\cal T}_h}-( \bv_0,    \nabla p)_T 
+\langle p, \bv_0\cdot\bn\rangle_{\partial T}+ \langle {\cal Q}_h p, (\bv_b-\bv_0)\cdot \bn \rangle_{\partial T}\\    =& \sum_{T\in {\cal T}_h}-( \bv_0, \nabla p)_T + \langle ( {\cal Q}_h -I)p, (\bv_b-\bv_0)\cdot \bn \rangle_{\partial T},
\end{split}
 \end{equation}
where we used   $\sum_{T\in {\cal T}_h} \langle p, \bv_b\cdot\bn\rangle_{\partial T}=\langle p, \bv_b\cdot\bn\rangle_{\partial \Omega}=0$ since $\bv_b=0$ on $\partial \Omega$.

Subtracting \eqref{termm} from \eqref{term1}, and using the first equation in \eqref{model}, we obtain: 
\begin{equation*}  
\begin{split}
&\sum_{T\in {\cal T}_h}(\nabla_{w} \bu, \nabla_w  \bv_h)_T -(\nabla_w \cdot \bv_h, p)_T \\=& \sum_{T\in {\cal T}_h}-(  \bv_0,   \Delta \bu)_T+ 
  \langle \bv_b-\bv_0,   ({\cal Q}_h-I) \nabla \bu \cdot \bn \rangle_{\partial T} + 
 ( \bv_0, \nabla p)_T -\langle ({\cal Q}_h -I)p, (\bv_b-\bv_0)\cdot \bn \rangle_{\partial T}\\
 =&\sum_{T\in {\cal T}_h} (  \bv_0,   \bf)_T+ 
  \langle \bv_b-\bv_0,   ({\cal Q}_h-I) \nabla \bu \cdot \bn \rangle_{\partial T}   -\langle ({\cal Q}_h -I)p, (\bv_b-\bv_0)\cdot \bn \rangle_{\partial T}.
  \end{split}
\end{equation*}
Subtracting the first equation of  \eqref{WG}  from the above  yields  the first equation of \eqref{erroreqn}. 

Next, using \eqref{pro2} and the second equation of \eqref{model}, we find:
$$
0= (\nabla_w \cdot \bu, q_h)=\sum_{T\in {\cal T}_h}({\cal Q}_h \nabla\cdot \bu, q_h)_T=\sum_{T\in {\cal T}_h}(\nabla\cdot \bu, q_h)_T=0.
$$
Subtracting this result from the second equation of \eqref{WG} yields the second equation of \eqref{erroreqn}. 

This concludes the proof.

\end{proof}

\section{Error Estimates in $H^1$} This section establishes the error estimates in a discrete $H^1$ norm. 
\begin{lemma}\cite{wg21}\label{lem}
Let ${\cal T}_h$ be a finite element partition of the domain $\Omega$ satisfying the shape  regularity  assumption  specified in \cite{wy3655}. For any $0\leq s \leq 1$ and $1\leq m \leq k$, $1\leq  n \leq  2N+k-1$, the following estimates hold:
\begin{eqnarray}
\label{error1}
 \sum_{T\in {\cal T}_h} h_T^{2s}\|({\cal Q}_h-I)p\|^2_{s,T}&\leq& C  h^{2n}\|p\|^2_{n},\\
\label{error2}
\sum_{T\in {\cal T}_h}h_T^{2s}\|\bu- Q _0\bu\|^2_{s,T}&\leq& C h^{2(m+1)}\|\bu\|^2_{m+1},\\
\label{error3}\sum_{T\in {\cal T}_h}h_T^{2s}\|\nabla\bu-{\cal Q}_h(\nabla\bu)\|^2_{s,T}&\leq& C h^{2n}\|\bu\|^2_{n+1}.
\end{eqnarray}
 \end{lemma}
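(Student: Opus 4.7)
The three bounds are the standard $L^2$-projection error estimates extended to shape-regular polytopal partitions, so my plan is to reduce each one to the Bramble--Hilbert lemma on a single element $T$ (in the form already established for polytopal meshes in \cite{wy3655}) and then sum. The central tool is the best-approximation property of $L^2$ projections: if $\Pi_T$ is the $L^2$ projection onto a polynomial space containing $P_{n-1}(T)$, then for every polynomial $q \in P_{n-1}(T)$,
\begin{equation*}
\|p - \Pi_T p\|_{L^2(T)} \le \|p - q\|_{L^2(T)}.
\end{equation*}
Combined with Bramble--Hilbert, which supplies a $q$ such that $\|p - q\|_{L^2(T)} \le C h_T^n |p|_{n,T}$, this gives the $s=0$ case. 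For the $s=1$ case I would use the identity $p - \Pi_T p = (I - \Pi_T)(p - q)$, apply the triangle inequality, bound the $H^1$-norm of the polynomial $\Pi_T(p-q)$ by an inverse inequality (which gives a factor $h_T^{-1}\|p-q\|_{L^2(T)}$), and combine with $\|p-q\|_{1,T} \le C h_T^{n-1}|p|_{n,T}$ from Bramble--Hilbert.

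In more detail, I would prove \eqref{error1} first. For each $T \in {\cal T}_h$, since ${\cal Q}_h$ projects onto $P_{2N+k-1}(T)$ (resp.\ $P_{N+k-1}(T)$ in the convex case) and $n \le 2N+k-1$, the argument above yields $\|({\cal Q}_h - I)p\|_{s,T} \le C h_T^{n-s}|p|_{n,T}$ for $s \in \{0,1\}$. Multiplying by $h_T^{2s}$ cancels the $h_T^{-2s}$ from the inverse inequality and leaves $h_T^{2n}|p|_{n,T}^2$; summing over $T$ and bounding $\sum_T |p|_{n,T}^2 \le \|p\|_n^2$ gives \eqref{error1}. Estimate \eqref{error2} is the same argument with $Q_0$ projecting onto $[P_k(T)]^d$, so polynomials of degree $m$ (with $m \le k$) are reproduced, giving the $h^{2(m+1)}$ factor. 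Estimate \eqref{error3} is obtained by applying \eqref{error1} componentwise to $\nabla \bu \in [H^n(T)]^{d\times d}$, which replaces $\|p\|_n$ by $|\bu|_{n+1}$.

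The main technical obstacle is that $T$ is a general (possibly non-convex) polytopal element, so one cannot invoke the classical Bramble--Hilkert theory on a reference simplex via a single affine map. The standard remedy, which is the one used in \cite{wg21, wy3655}, is to invoke a shape-regularity-based version of Bramble--Hilbert valid on each $T$: one may cover $T$ by a star-shaped subdomain with respect to a ball of radius $\sim h_T$, construct an averaged Taylor polynomial $q$ of $p$ there, and then use the shape-regularity constants to control the resulting approximation error uniformly in $T$. Once this polytopal Bramble--Hilbert estimate is in hand (and shape regularity is likewise what justifies the inverse inequality on $\Pi_T(p-q)$), the rest is bookkeeping. I would therefore spend most of the writeup quoting the polytopal Bramble--Hilbert bound from \cite{wy3655} and the best-approximation identity, and then carry out the two-line chain above for each of the three estimates.
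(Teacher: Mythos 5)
The paper does not prove this lemma at all: it is quoted directly from \cite{wg21} (with the shape-regularity framework of \cite{wy3655}) and used as a black box, so there is no in-paper proof to compare against. Your proposed argument — best approximation of the $L^2$ projection plus a polytopal Bramble--Hilbert estimate on star-shaped subdomains, an inverse inequality for the $H^1$ seminorm, and summation over elements — is exactly the standard derivation used in those references, and it is sound; the only cosmetic remark is that you treat $s\in\{0,1\}$ while the lemma is stated for all $0\le s\le 1$, but the intermediate values follow by interpolation and in any case the paper only ever invokes the lemma with $s=0$ or $s=1$.
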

 \begin{lemma}\label{lemma1}
If   $\bu\in [H^{k+1}(\Omega)]^d$, then there exists a constant 
$C$
 such that 
\begin{equation}\label{erroresti1}
\3bar \bu-Q_h\bu \3bar \leq Ch^{k}\|\bu\|_{k+1}.
\end{equation}
\end{lemma}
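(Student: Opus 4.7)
The plan is to interpret $\bu-Q_h\bu$ as the weak function $\{\bu-Q_0\bu,\,\bu-Q_b\bu\}$ on each $T\in\T_h$, where ``$\bu$'' on $\partial T$ denotes its trace, and to estimate $\|\nabla_w(\bu-Q_h\bu)\|_T$ directly via \eqref{2.4new}. Choosing the test tensor $\bvarphi=\nabla_w(\bu-Q_h\bu)\in[P_r(T)]^{d\times d}$ yields
\begin{equation*}
\|\nabla_w(\bu-Q_h\bu)\|_T^2 = (\nabla(\bu-Q_0\bu),\nabla_w(\bu-Q_h\bu))_T + \langle Q_0\bu-Q_b\bu,\,\nabla_w(\bu-Q_h\bu)\cdot\bn\rangle_{\partial T},
\end{equation*}
since $\bv_b-\bv_0=Q_0\bu-Q_b\bu$ for this weak function. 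Cauchy--Schwarz together with the polynomial trace inequality \eqref{trace} applied to $\nabla_w(\bu-Q_h\bu)$ then produces the element-level bound
\begin{equation*}
\|\nabla_w(\bu-Q_h\bu)\|_T \leq C\bigl(\|\nabla(\bu-Q_0\bu)\|_T + h_T^{-1/2}\|Q_0\bu-Q_b\bu\|_{\partial T}\bigr),
\end{equation*}
which is essentially the upper half of the norm equivalence in Lemma \ref{normeqva}, whose proof does not actually require $\bv_0$ to be polynomial.

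The two pieces are then bounded by Lemma \ref{lem}. The volume term is controlled by \eqref{error2} with $m=k$ and $s=1$, giving $\sum_T\|\nabla(\bu-Q_0\bu)\|_T^2\leq Ch^{2k}\|\bu\|_{k+1}^2$. For the boundary jump, the key observation is that on any edge or face $e\subset\partial T$ the trace of the polynomial $Q_0\bu$ already lies in $[P_k(e)]^d$, so $Q_b(Q_0\bu|_e)=Q_0\bu|_e$ and hence $Q_0\bu-Q_b\bu=Q_b(Q_0\bu-\bu)$ on $e$. The $L^2$-stability of $Q_b$ reduces the estimate to controlling $\|\bu-Q_0\bu\|_{\partial T}$, which the trace inequality \eqref{tracein} combined with the element-wise form of \eqref{error2} (used with both $s=0$ and $s=1$) bounds by $Ch_T^{2k+1}\|\bu\|_{k+1,T}^2$. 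Dividing by $h_T$ gives the matching $O(h^{2k})$ contribution, and summing over all $T$ completes the proof.

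I do not anticipate any substantive obstacle. The only point demanding care is the proper interpretation of $\bu-Q_h\bu$ as a weak function and the use of the $Q_b$-invariance of $Q_0\bu|_e$ to avoid losing a factor of $h$ in the jump estimate; beyond these, the argument is a routine assembly of Lemma \ref{lem}, the trace inequalities, and the $L^2$-stability of the projections $Q_0$ and $Q_b$.
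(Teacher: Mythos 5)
Your proposal is correct and follows essentially the same route as the paper: test \eqref{2.4new} with $\bvarphi=\nabla_w(\bu-Q_h\bu)$, apply Cauchy--Schwarz together with the trace inequalities \eqref{trace} and \eqref{tracein}, and bound both the volume and jump terms via \eqref{error2} with $m=k$ and $s=0,1$. Your explicit remark that $Q_b(Q_0\bu|_e)=Q_0\bu|_e$, so the jump reduces to $Q_b(\bu-Q_0\bu)$ and the $L^2$-stability of $Q_b$ applies, is the same step the paper uses implicitly when it replaces $\|Q_b\bu-Q_0\bu\|_{\partial T}$ by $\|\bu-Q_0\bu\|_{\partial T}$.
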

\begin{proof}
Using \eqref{2.4new}, the trace inequalities \eqref{tracein} and \eqref{trace}, the Cauchy-Schwarz inequality, and the estimate \eqref{error2} for  $m=k$ and $s=0, 1$, we derive: for any $\bvarphi\in [P_r(T)]^{d\times d}$,
\begin{equation*}
\begin{split}
&|\sum_{T\in {\cal T}_h} (\nabla_w (\bu-Q_h\bu), \bvarphi)_T|\\=&  | \sum_{T\in {\cal T}_h} (\nabla (\bu-Q_0\bu), \bvarphi)_T-\langle Q_b\bu-Q_0\bu, \bvarphi \cdot\bn\rangle_{\partial T}|\\
\leq & (\sum_{T\in {\cal T}_h} \|\nabla (\bu-Q_0\bu)\|_T )^{\frac{1}{2}}(\sum_{T\in {\cal T}_h}\|\bvarphi\|_T^2)^{\frac{1}{2}}+(\sum_{T\in {\cal T}_h} \| Q_b\bu-Q_0\bu\|_{\partial T} ^2)^{\frac{1}{2}} (\sum_{T\in {\cal T}_h}\|\bvarphi \cdot\bn\|_{\partial T}^2)^{\frac{1}{2}}\\\leq & (\sum_{T\in {\cal T}_h} \|\nabla (\bu-Q_0\bu)\|_T )^{\frac{1}{2}}(\sum_{T\in {\cal T}_h}\|\bvarphi\|_T^2)^{\frac{1}{2}}
\\&+(\sum_{T\in {\cal T}_h} h_T^{-1}\| \bu-Q_0\bu\|_{ T} ^2+h_T \| \bu-Q_0\bu\|_{1,T} ^2)^{\frac{1}{2}} (\sum_{T\in {\cal T}_h}h_T^{-1}\|\bvarphi \|_{T}^2)^{\frac{1}{2}}\\
\leq & Ch^k\|\bu\|_{k+1} (\sum_{T\in {\cal T}_h} \|\bvarphi \|_{T}^2)^{\frac{1}{2}}.
\end{split}
\end{equation*}
Letting $\bvarphi=\nabla_w (\bu-Q_h\bu)$ yields 
$$
\sum_{T\in {\cal T}_h} (\nabla_w (\bu-Q_h\bu), \nabla_w (\bu-Q_h\bu))_T\leq 
 Ch^{k}\|\bu\|_{k+1}\3bar \bu-Q_h\bu \3bar.$$  
 
 This completes the proof.
\end{proof}

\begin{lemma}\label{lemma2}
If   $\bu\in [H^{k+1}(\Omega)]^d$, then there exists a constant 
$C$
 such that 
\begin{equation}\label{erroresti2}
(\sum_{T\in {\cal T}_h} \|\nabla_w\cdot (\bu-Q_h\bu)\|_T^2)^{
\frac{1}{2}
} \leq Ch^{k}\|\bu\|_{k+1}.
\end{equation}
\end{lemma}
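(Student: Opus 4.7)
The plan is to mimic the strategy of Lemma \ref{lemma1}: expand $\nabla_w\cdot(\bu-Q_h\bu)$ via its defining identity (\ref{div}), bound the right-hand side with Cauchy--Schwarz together with a polynomial inverse inequality and the trace inequality (\ref{trace}), self-test with $w=\nabla_w\cdot(\bu-Q_h\bu)$, and then invoke the standard projection estimates from Lemma \ref{lem}.

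First, viewing $\bu\in[H^1(T)]^d$ as the weak function $\{\bu,\bu\}$, we have $\bu-Q_h\bu=\{\bu-Q_0\bu,\;\bu-Q_b\bu\}$. Applying the definition (\ref{div}) of the discrete weak divergence directly to this weak function, for every test polynomial $w\in P_r(T)$,
$$
(\nabla_w\cdot(\bu-Q_h\bu),w)_T = -(\bu-Q_0\bu,\nabla w)_T + \langle(\bu-Q_b\bu)\cdot\bn,\,w\rangle_{\partial T}.
$$
Cauchy--Schwarz, the polynomial inverse inequality $\|\nabla w\|_T\le Ch_T^{-1}\|w\|_T$, and the trace inequality (\ref{trace}) $\|w\|_{\partial T}\le Ch_T^{-1/2}\|w\|_T$ then yield
$$
\bigl|(\nabla_w\cdot(\bu-Q_h\bu),w)_T\bigr|\le C\bigl(h_T^{-1}\|\bu-Q_0\bu\|_T + h_T^{-1/2}\|\bu-Q_b\bu\|_{\partial T}\bigr)\|w\|_T.
$$
Since $\nabla_w\cdot(\bu-Q_h\bu)$ itself lies in $P_r(T)$, I self-test by setting $w=\nabla_w\cdot(\bu-Q_h\bu)$ and divide:
$$
\|\nabla_w\cdot(\bu-Q_h\bu)\|_T \le C\bigl(h_T^{-1}\|\bu-Q_0\bu\|_T + h_T^{-1/2}\|\bu-Q_b\bu\|_{\partial T}\bigr).
$$

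Second, I would bound the two terms on the right using Lemma \ref{lem}. The volume piece is direct: (\ref{error2}) with $m=k$, $s=0$ gives $h_T^{-1}\|\bu-Q_0\bu\|_T\le Ch_T^{k}\|\bu\|_{k+1,T}$. For the boundary piece, the $L^2$-optimality of $Q_b$ on each face $e\subset\partial T$ implies $\|\bu-Q_b\bu\|_e\le \|\bu-Q_0\bu\|_e$, and then the trace inequality (\ref{tracein}) combined with (\ref{error2}) at $s=0$ and $s=1$ yields
$$
\|\bu-Q_b\bu\|_{\partial T}^2 \le C\bigl(h_T^{-1}\|\bu-Q_0\bu\|_T^2 + h_T\|\nabla(\bu-Q_0\bu)\|_T^2\bigr)\le Ch_T^{2k+1}\|\bu\|_{k+1,T}^2,
$$
so $h_T^{-1/2}\|\bu-Q_b\bu\|_{\partial T}\le Ch_T^{k}\|\bu\|_{k+1,T}$. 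Therefore $\|\nabla_w\cdot(\bu-Q_h\bu)\|_T\le Ch_T^k\|\bu\|_{k+1,T}$.

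Finally, squaring, summing over $T\in\mathcal{T}_h$, and taking the square root produces the asserted bound $Ch^k\|\bu\|_{k+1}$. The main obstacle is not conceptual; it is the bookkeeping on the boundary piece, where one has to combine the $L^2$-optimality of $Q_b$ on each face with the trace inequality (\ref{tracein}) so that both the $L^2$ and the $H^1$ contributions of (\ref{error2}) line up to deliver the correct power $h_T^{k}$. Everything else is a direct adaptation of Lemma \ref{lemma1}.
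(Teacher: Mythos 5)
Your proof is correct and follows essentially the same route as the paper, which simply invokes the argument of Lemma \ref{lemma1}: bound the pairing of $\nabla_w\cdot(\bu-Q_h\bu)$ against an arbitrary polynomial test function using Cauchy--Schwarz, trace, and $L^2$-projection estimates, then self-test with $w=\nabla_w\cdot(\bu-Q_h\bu)$. The only cosmetic difference is that you start from \eqref{div} rather than the integrated-by-parts form \eqref{divnew}, trading the $H^1$ projection estimate on the volume term for a polynomial inverse inequality on the test function; both give the same $O(h^k)$ rate.
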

\begin{proof}
    The proof follows the same approach as Lemma  \ref{lemma1}.
\end{proof}

\begin{lemma}
For any $\bu\in [H^{k+1}(\Omega)]^d$, $q\in H^{k}(\Omega)$, $\bv_h=\{\bv_0, \bv_b\}\in V_h^0$ and $q_h \in W_h$, the following estimates hold:
\begin{equation}\label{es1}
   |\ell_1(\bu, \bv_h)| \leq  Ch^k \|\bu\|_{k+1}\3bar \bv_h \3bar, 
\end{equation}
\begin{equation}\label{es2}
   |\ell_2(\bv_h, p)| \leq  Ch^k \|p\|_{k}\3bar \bv_h \3bar.
\end{equation}

\end{lemma}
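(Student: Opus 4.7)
The plan is to bound both $\ell_1$ and $\ell_2$ by a standard two-step Cauchy-Schwarz argument on boundary integrals, then invoke the trace inequality \eqref{tracein} to pull the projection errors back to element interiors, and finally apply the approximation estimates of Lemma \ref{lem} together with the norm equivalence \eqref{normeq}.

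For \eqref{es1}, I would first apply the Cauchy-Schwarz inequality on each $\partial T$ to the integrand $\langle \bv_b-\bv_0, ({\cal Q}_h - I)\nabla\bu \cdot \bn \rangle_{\partial T}$, and then apply a weighted discrete Cauchy-Schwarz in the sum over $T$, splitting the weight as $1 = h_T^{-1/2}\cdot h_T^{1/2}$. This yields
\begin{equation*}
|\ell_1(\bu,\bv_h)| \leq \Bigl(\sum_{T\in {\cal T}_h} h_T^{-1}\|\bv_b-\bv_0\|_{\partial T}^2\Bigr)^{1/2} \Bigl(\sum_{T\in {\cal T}_h} h_T\|({\cal Q}_h - I)\nabla\bu\|_{\partial T}^2\Bigr)^{1/2}.
\end{equation*}
The first factor is bounded by $\|\bv_h\|_{1,h}$, which by \eqref{normeq} is controlled by $C\3bar\bv_h\3bar$. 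For the second factor, I would use the trace inequality \eqref{tracein} applied to $({\cal Q}_h-I)\nabla\bu$ on each $T$, giving
\begin{equation*}
h_T\|({\cal Q}_h - I)\nabla\bu\|_{\partial T}^2 \leq C\bigl(\|({\cal Q}_h - I)\nabla\bu\|_T^2 + h_T^2\|({\cal Q}_h - I)\nabla\bu\|_{1,T}^2\bigr),
\end{equation*}
and then invoke \eqref{error3} with $n=k$ and $s=0,1$ to bound each piece by $Ch^{2k}\|\bu\|_{k+1}^2$. Combining these pieces yields \eqref{es1}.

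For \eqref{es2}, the argument is structurally identical. Cauchy-Schwarz on each $\partial T$, followed by the weighted discrete Cauchy-Schwarz, gives
\begin{equation*}
|\ell_2(\bv_h,p)| \leq \Bigl(\sum_{T\in {\cal T}_h} h_T\|({\cal Q}_h - I)p\|_{\partial T}^2\Bigr)^{1/2}\Bigl(\sum_{T\in {\cal T}_h} h_T^{-1}\|\bv_b-\bv_0\|_{\partial T}^2\Bigr)^{1/2}.
\end{equation*}
The second factor is again handled by \eqref{normeq}. For the first, the trace inequality \eqref{tracein} applied to $({\cal Q}_h - I)p$ together with \eqref{error1} at $n=k$ and $s=0,1$ produces the bound $Ch^{2k}\|p\|_k^2$, which completes the proof of \eqref{es2}.

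There is no real obstacle in this lemma; the estimate is essentially bookkeeping. The only mild subtlety is the choice of the weight splitting $h_T^{-1/2}\cdot h_T^{1/2}$, which must match the scaling produced by the trace inequality on the polynomial/Sobolev approximation side with the scaling of the discrete $H^1$ seminorm on the jump side. Once this is aligned, the previously established approximation estimates of Lemma \ref{lem} and the norm equivalence of Lemma \ref{normeqva} deliver the result immediately.
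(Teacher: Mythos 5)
Your proof is correct and follows essentially the same route as the paper's: weighted Cauchy--Schwarz with the $h_T^{-1/2}\cdot h_T^{1/2}$ split, the trace inequality \eqref{tracein} on the projection-error factor, the approximation estimates \eqref{error1} and \eqref{error3} with $n=k$, and the norm equivalence \eqref{normeq} to pass from $\|\bv_h\|_{1,h}$ to $\3bar\bv_h\3bar$. Nothing further is needed.
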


\begin{proof}
Recall that ${\cal Q}_h$ denotes the $L^2$ projection operator onto the finite element space of piecewise polynomials of degree at most $2N+k-1\geq k$ for non-convex element and $N+k-1\geq k$ for convex element in the finite element partitions. 

Using  the Cauchy-Schwarz inequality, the trace inequality \eqref{tracein}, the norm equivalence \eqref{normeq}, and letting $n=k$ in \eqref{error3},  we have
 \begin{equation*} 
\begin{split}
|\ell_1(\bu, \bv_h)|\leq &(\sum_{T\in {\cal T}_h}  
  h_T^{-1}\|\bv_b-\bv_0\|_{\partial T} ^2)^{\frac{1}{2}} (\sum_{T\in {\cal T}_h}  
 h_T \|({\cal Q}_h-I) \nabla \bu \cdot \bn\|_{\partial T} ^2)^{\frac{1}{2}} \\
\leq & \|\bv_h\|_{1, h} (\sum_{T\in {\cal T}_h}  
 \|({\cal Q}_h-I) \nabla \bu \cdot \bn\|_{T} ^2+ h_T^2 \|({\cal Q}_h-I) \nabla \bu \cdot \bn\|_{1, T} ^2)^{\frac{1}{2}}\\ 
 \leq & Ch^k \|\bu\|_{k+1}\3bar \bv_h \3bar.
\end{split}
\end{equation*}
This completes the proof of \eqref{es1}.

Using the Cauchy-Schwarz inequality, the trace inequality \eqref{tracein}, letting $n=k$ in  \eqref{error1}, the norm equivalence \eqref{normeq},   we have
 \begin{equation*} 
\begin{split}
|\ell_2(\bv_h, p)|\leq &(\sum_{T\in {\cal T}_h}  
  h_T^{-1}\|\bv_b-\bv_0\|_{\partial T} ^2)^{\frac{1}{2}} (\sum_{T\in {\cal T}_h}  
 h_T \|({\cal Q}_h-I)p\|_{\partial T} ^2)^{\frac{1}{2}} \\
\leq & \|\bv_h\|_{1, h} (\sum_{T\in {\cal T}_h}  
 \|({\cal Q}_h-I)p\|_{T} ^2+ h_T^2 \|({\cal Q}_h-I)p\|_{1, T} ^2)^{\frac{1}{2}}\\ 
 \leq & Ch^k \|p\|_{k}\3bar \bv_h \3bar.
\end{split}
\end{equation*}
This completes the proof of \eqref{es2}.

\end{proof}
\begin{theorem}
Suppose  the exact solutions 
$\bu$ and $p$
 of the Stokes problem \eqref{model} satisfy 
$\bu\in [H^{k+1}(\Omega)]^d$ and $p\in H^k(\Omega)$. Then the error estimate satisfies:
\begin{equation}\label{trinorm}
\3bar \bu-\bu_h\3bar +\|p-p_h\|\leq Ch^{k}(\|\bu\|_{k+1}+\|p\|_k).
\end{equation}
\end{theorem}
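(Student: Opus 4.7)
The plan is the standard mixed-method argument: decompose the velocity error as $e_{\bu_h}=(\bu-Q_h\bu)+\bw_h$ with $\bw_h:=Q_h\bu-\bu_h\in V_h^0$, and the pressure error as $e_{p_h}=(p-\widetilde Q_h p)+\rho_h$, where $\widetilde Q_h$ is the $L^2$ projection onto $W_h$ and $\rho_h:=\widetilde Q_h p-p_h\in W_h$. I would bound $\3bar\bw_h\3bar$ by energy-testing the first error equation and $\|\rho_h\|$ via the inf-sup condition \eqref{infsup}, then close with the triangle inequality using Lemma \ref{lemma1} and \eqref{error1}.

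The first step is to observe from \eqref{pro3} and $\nabla\cdot\bu=0$ that $\nabla_w\cdot Q_h\bu=\mathcal{Q}_h(\nabla\cdot\bu)=0$; combined with the WG constraint $(\nabla_w\cdot\bu_h,q_h)=0$ from \eqref{WG}, this yields $(\nabla_w\cdot\bw_h,q_h)=0$ for every $q_h\in W_h$. Choosing $\bv_h=\bw_h$ in the first equation of \eqref{erroreqn}, splitting $e_{\bu_h}$ and $e_{p_h}$, and noting that the $(\nabla_w\cdot\bw_h,\rho_h)$ contribution vanishes, gives the identity
\begin{equation*}
\3bar\bw_h\3bar^2=-\bigl(\nabla_w(\bu-Q_h\bu),\nabla_w\bw_h\bigr)+\bigl(\nabla_w\cdot\bw_h,p-\widetilde Q_h p\bigr)+\ell_1(\bu,\bw_h)+\ell_2(\bw_h,p).
\end{equation*}
Three of the four right-hand terms are controlled directly by Lemma \ref{lemma1}, \eqref{es1}, and \eqref{es2}, each contributing a factor $Ch^k(\|\bu\|_{k+1}+\|p\|_k)\,\3bar\bw_h\3bar$.

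The main technical ingredient is an auxiliary bound $\|\nabla_w\cdot\bv\|\le C\,\3bar\bv\3bar$ for every $\bv\in V_h$, which is needed to estimate $(\nabla_w\cdot\bw_h,p-\widetilde Q_h p)$. I would derive it by testing \eqref{divnew} with $w=\nabla_w\cdot\bv$ and applying Cauchy--Schwarz, the polynomial trace inequality \eqref{trace}, and the norm equivalence \eqref{normeq}, which produces $\|\nabla_w\cdot\bv\|_T\le C(\|\nabla\bv_0\|_T+h_T^{-1/2}\|\bv_b-\bv_0\|_{\partial T})$ and hence $\|\nabla_w\cdot\bv\|\le C\,\3bar\bv\3bar$ after summation over $T$. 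Combined with \eqref{error1} for $n=k$, this yields $|(\nabla_w\cdot\bw_h,p-\widetilde Q_h p)|\le Ch^k\|p\|_k\3bar\bw_h\3bar$, so cancelling one power of $\3bar\bw_h\3bar$ in the identity above gives $\3bar\bw_h\3bar\le Ch^k(\|\bu\|_{k+1}+\|p\|_k)$. Triangle inequality with Lemma \ref{lemma1} then controls $\3bar\bu-\bu_h\3bar$.

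For the pressure, I apply the inf-sup condition \eqref{infsup} to $\rho_h$ and, using the first error equation in \eqref{erroreqn}, rewrite
\begin{equation*}
(\nabla_w\cdot\bv_h,\rho_h)=(\nabla_w e_{\bu_h},\nabla_w\bv_h)-(\nabla_w\cdot\bv_h,p-\widetilde Q_h p)-\ell_1(\bu,\bv_h)-\ell_2(\bv_h,p).
\end{equation*}
Bounding the four terms (using the already-proved velocity estimate on the first, the auxiliary divergence bound on the second, and \eqref{es1}--\eqref{es2} on the rest) delivers $\|\rho_h\|\le Ch^k(\|\bu\|_{k+1}+\|p\|_k)$; a final triangle inequality with $\|p-\widetilde Q_h p\|$ completes the pressure bound. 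The only non-routine step is establishing the auxiliary inequality $\|\nabla_w\cdot\bv\|\le C\3bar\bv\3bar$, which the excerpt does not state explicitly but follows directly from the already-developed tools.
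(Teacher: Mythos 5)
Your proposal is correct and follows the same overall strategy as the paper (energy-testing the first error equation with $\bv_h=Q_h\bu-\bu_h$, then the inf-sup condition \eqref{infsup} for the pressure), but it diverges in the treatment of the pressure-coupling term, and your route is arguably the cleaner one. The paper keeps $e_{p_h}=p-p_h$ intact and handles $I_4=(\nabla_w\cdot(Q_h\bu-\bu_h),e_{p_h})$ through a chain of insertions of ${\cal Q}_h p$ (the high-degree projection), Lemma \ref{lemma2}, and the identity \eqref{pro3}; it then applies the inf-sup condition to ${\cal Q}_h p-p_h$, which is a piecewise polynomial of degree $2N+k-1$ and hence not literally an element of $W_h$ as \eqref{infsup} requires. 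You instead split $e_{p_h}=(p-\widetilde Q_h p)+\rho_h$ with $\widetilde Q_h$ the projection onto $W_h$, so that $\rho_h\in W_h$, the orthogonality $(\nabla_w\cdot\bw_h,q_h)=0$ for $q_h\in W_h$ (from \eqref{pro3}, $\nabla\cdot\bu=0$, and the scheme \eqref{WG}) annihilates the $\rho_h$ contribution outright, and the inf-sup condition applies to $\rho_h$ exactly as stated. The price is the auxiliary stability bound $\|\nabla_w\cdot\bv\|\le C\3bar\bv\3bar$, which the paper never states but which your derivation from \eqref{divnew}, the trace inequality \eqref{trace}, and the norm equivalence \eqref{normeq} establishes correctly; the paper's substitute is Lemma \ref{lemma2} together with a bound on $\|{\cal Q}_h p-p_h\|$ whose justification at that stage of the argument is delicate. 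The only blemish in your write-up is the citation of \eqref{error1} for $\|p-\widetilde Q_h p\|\le Ch^k\|p\|_k$: that estimate is stated for ${\cal Q}_h$, not for the degree-$(k-1)$ projection $\widetilde Q_h$, though the corresponding bound for $\widetilde Q_h$ is the standard Bramble--Hilbert estimate and holds with the same rate, so this is a mis-attribution rather than a gap.
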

\begin{proof}
 Letting $\bv_h=Q_h\bu-\bu_h$  in the first equation of \eqref{erroreqn} gives
  \begin{equation} \label{er}
\begin{split}
\3bar e_{\bu_h}\3bar ^2=& \sum_{T\in {\cal T}_h} (\nabla_w  e_{\bu_h} , \nabla_w   e_{\bu_h} )_T\\
=& \sum_{T\in {\cal T}_h} (\nabla_w  e_{\bu_h} , \nabla_w    (\bu-Q_h\bu) )_T+ (\nabla_w  e_{\bu_h} , \nabla_w    ( Q_h\bu-\bu_h) )_T\\
=& \sum_{T\in {\cal T}_h} (\nabla_w  e_{\bu_h} , \nabla_w    (\bu-Q_h\bu) )_T+ \ell_1(\bu, Q_h\bu-\bu_h) \\
&+\ell_2 (Q_h\bu-\bu_h, p)+\sum_{T\in {\cal T}_h} (\nabla_w \cdot(Q_h\bu-\bu_h), e_{p_h})_T\\
 =&I_1+I_2+I_3+I_4. 
\end{split}
\end{equation}

We will estimate $I_i$ for $i=1, \cdots, 4$.

For $I_1$, using the Cauchy-Schwarz inequality, and Lemma \ref{lemma1}, we have
  \begin{equation*} 
\begin{split}
\sum_{T\in {\cal T}_h} (\nabla_w  e_{\bu_h} , \nabla_w    (\bu-Q_h\bu) )_T\leq & \3bar   e_{\bu_h}\3bar \3bar       \bu-Q_h\bu  \3bar
\\ \leq & Ch^k\|\bu\|_{k+1}\3bar   e_{\bu_h}\3bar.
\end{split}
\end{equation*}

For $I_2$,  letting $\bv_h=Q_h\bu-\bu_h$ in \eqref{es1}, and using Lemma \ref{lemma1} and the triangle inequality, we have
\begin{equation*} 
\begin{split}
|\ell_1(\bu, Q_h\bu-\bu_h)|\leq & Ch^k\|\bu\|_{k+1}\3bar  Q_h\bu-\bu_h \3bar \\
\leq & Ch^k\|\bu\|_{k+1}(\3bar  Q_h\bu- \bu  \3bar+  \3bar   \bu-\bu_h \3bar)
\\  \leq & Ch^k\|\bu\|_{k+1}(h^k\|\bu\|_{k+1}+  \3bar   \bu-\bu_h \3bar).
\end{split}
\end{equation*}

For $I_3$,  letting $\bv_h=Q_h\bu-\bu_h$ in \eqref{es2}, and using Lemma \ref{lemma1}, the triangle inequality   and the Young's inequality, we obtain
\begin{equation*} 
\begin{split}
|\ell_2 (Q_h\bu-\bu_h, p)|\leq & Ch^k\|p\|_{k+1}\3bar  Q_h\bu-\bu_h \3bar \\
\leq & Ch^k\|p\|_{k+1}(\3bar  Q_h\bu- \bu  \3bar+  \3bar   \bu-\bu_h \3bar)
\\  \leq & Ch^k\|p\|_{k+1}(h^k\|\bu\|_{k+1}+  \3bar   \bu-\bu_h \3bar) 
 \\  \leq & C_1h^{2k}\|p\|^2_{k+1}+C_2 h^{2k}\|\bu\|^2_{k+1}+ Ch^k\|p\|_{k+1}  \3bar   \bu-\bu_h \3bar.
\end{split}
\end{equation*}

 Now we estimate $I_4$.   
Using \eqref{pro3} and the second equation in \eqref{model} gives
\begin{equation} \label{ss2}
\begin{split}
 \sum_{T\in {\cal T}_h}(\nabla_w\cdot Q_h\bu,  p-p_h)_T 
=&\sum_{T\in {\cal T}_h}({\cal Q}_h(\nabla \cdot  \bu),   p-p_h)_T=0.
\end{split}
\end{equation}
Using \eqref{ss2}, the estimate \eqref{error1} with $n=k$,  Lemma \ref{lemma2}, the fact that $(\nabla_w \cdot \bu_h,  p-{\cal Q}_hp)_T=0$, the Cauchy-Schwarz inequality, \eqref{pro3},  the second equation in \eqref{model},  and Young's inequality  that 
 \begin{equation*} 
\begin{split}
&|\sum_{T\in {\cal T}_h}(\nabla_w \cdot(Q_h\bu-\bu_h), e_{p_h})_T|\\=&  |\sum_{T\in {\cal T}_h}(\nabla_w \cdot \bu_h,  p-p_h)_T  |\\
= &  |\sum_{T\in {\cal T}_h}(\nabla_w \cdot \bu_h,  p-{\cal Q}_hp)_T+ (\nabla_w \cdot \bu_h,  {\cal Q}_hp-p_h)_T |\\
= &  |\sum_{T\in {\cal T}_h}  (\nabla_w \cdot \bu_h,  {\cal Q}_hp-p_h)_T |\\
= &  |\sum_{T\in {\cal T}_h}  (\nabla_w \cdot (\bu_h-Q_h\bu),  {\cal Q}_hp-p_h)_T+(\nabla_w \cdot  Q_h\bu,  {\cal Q}_hp-p_h)_T |\\
= &  |\sum_{T\in {\cal T}_h}  (\nabla_w \cdot (\bu_h-Q_h\bu),  {\cal Q}_hp-p_h)_T+ ({\cal Q}_h(\nabla  \cdot   \bu),  {\cal Q}_hp-p_h)_T |
\\=&  |\sum_{T\in {\cal T}_h} (\nabla_w \cdot (\bu_h-Q_h\bu),  {\cal Q}_hp-p_h)_T  |\\
\leq & (\sum_{T\in {\cal T}_h} \|(\nabla_w \cdot(Q_h\bu-\bu)\| _T^2)^{\frac{1}{2}}  (\sum_{T\in {\cal T}_h} \|{\cal Q}_hp-p_h\| _T^2)^{\frac{1}{2}}  \\
\leq & C  h^k\|\bu\|_{k+1}h^k\|p\|_k  \\
\leq & C_1h^{2k}\|p\|^2_{k+1}+C_2 h^{2k}\|\bu\|^2_{k+1}.
\end{split}
\end{equation*}
 
  Substituting the bounds for $I_i$ for $i=1,\cdots, 4$ into \eqref{er} gives
 \begin{equation*} 
\begin{split}
 \3bar e_{\bu_h}\3bar^2 \leq  &Ch^k\|\bu\|_{k+1} \3bar e_{\bu_h}\3bar+Ch^k\|\bu\|_{k+1}(h^k\|\bu\|_{k+1}+  \3bar   \bu-\bu_h \3bar)\\&+  C_1h^{2k}\|p\|^2_{k+1}+C_2 h^{2k}\|\bu\|^2_{k+1}+ Ch^k\|p\|_{k+1}  \3bar   \bu-\bu_h \3bar. 
 \end{split}
\end{equation*}

   This gives
\begin{equation}\label{euh}
   \3bar e_{\bu_h}\3bar\leq Ch^k(\|\bu\|_{k+1}+ \|p\|_{k}). 
\end{equation}

Finally, using the first equation of \eqref{erroreqn}, the fact that $(\nabla_w \cdot\bv_h, {\cal Q}_h p-p )_T=0$, \eqref{es1}, \eqref{es2}, and the Cauchy-Schwarz inequality gives 
 \begin{equation*} 
\begin{split}
&|\sum_{T\in {\cal T}_h}(\nabla_w \cdot\bv_h, {\cal Q}_h p-p_h)_T|\\
\leq&|\sum_{T\in {\cal T}_h}(\nabla_w \cdot\bv_h, {\cal Q}_h p-p )_T+(\nabla_w \cdot\bv_h, p-p_h)_T|\\
\leq&
 |\ell_1(\bu, \bv_h)|+|\ell_2(  \bv_h, p)|+|(\nabla_w  e_{\bu_h}, \nabla_w\bv_h)|\\
\leq &  Ch^k\|\bu\|_{k+1}\3bar  \bv_h\3bar+ Ch^k \|p\|_{k}\3bar \bv_h  \3bar +\3bar e_{\bu_h} \3bar \3bar \bv_h\3bar.
\end{split}
\end{equation*}
Using  the inf-sup condition \eqref{infsup}, and  \eqref{euh} leads to 
\begin{equation*} 
\begin{split}
\|{\cal Q}_h p-p_h \|\leq &C\frac{|\sum_{T\in {\cal T}_h}(\nabla_w \cdot\bv_h, {\cal Q}_h p-p_h)_T|}{\3bar \bv_h\3bar}\\
\leq & Ch^k\|\bu\|_{k+1}  + Ch^k \|p\|_{k}  +\3bar e_{\bu_h} \3bar 
\\
\leq & Ch^k(\|\bu\|_{k+1}  +   \|p\|_{k}). \end{split}
\end{equation*}
This, together with \eqref{error1}  with $n=k$ and the triangle inequality, yields 
$$
\|e_{p_h}\|\leq \|{\cal Q}_h p-p_h\|+\|p-{\cal Q}_h p\|\leq Ch^k(\|\bu\|_{k+1}  +   \|p\|_{k}).
$$
This completes the proof.
\end{proof}

\section{  $L^2$ Error Estimates}
To derive the error estimate in the $L^2$ norm, we employ the standard duality argument. Recall that the error function is  expressed as $e_{\bu_h}=\bu-\bu_h=\{e_{\bu_0}, e_{\bu_b}\}=\{\bu-\bu_0, \bu-\bu_b\}\in V_h^0$. Define $\bzeta_h =Q_h\bu - \bu_h=\{\bzeta_0, \bzeta_b\}=\{Q_0\bu - \bu_0, Q_b\bu - \bu_b\}\in V_h^0$. Consider the dual problem associated with the Stokes problem \eqref{model}, which seeks a function $\bw \in [H^2(\Omega)]^d$ and $q\in H^1(\Omega)$ satisfying:
\begin{equation}\label{dual}
\begin{split}
   - \Delta \bw+\nabla q &=\bzeta_0, \qquad \text{in}\ \Omega,\\
\nabla \cdot\bw&=0,\qquad \text{in}\ \Omega,  \\
\bw&=0, \qquad \text{on}\ \partial\Omega.
    \end{split}
\end{equation}
We assume the following regularity condition for the dual problem holds:
\begin{equation}\label{regu2}
 \|\bw\|_2+\|q\|_1\leq C\|\bzeta_0\|.
 \end{equation}
 
 \begin{theorem}
Let $\bu\in [H^{k+1}(\Omega)]^d$ and $p\in H^k(\Omega)$ be the exact solutions of the Stokes problem  \eqref{model}, and let 
 $\bu_h\in V_h^0$ and $p_h\in W_h$ denote the numerical solutions  obtained  using  the weak Galerkin scheme \ref{PDWG1}. Assume that the regularity condition   \eqref{regu2}   holds. Then, there exists a constant $C$ such that 
\begin{equation*}
\|\bu-\bu_0\|\leq Ch^{k+1}(\|\bu\|_{k+1}+\|p\|_k).
\end{equation*}
 \end{theorem}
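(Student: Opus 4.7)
The proof will use the Aubin--Nitsche duality argument against the dual problem \eqref{dual}. The plan is to express $\|\bzeta_0\|^2 = (\bzeta_0,-\Delta\bw+\nabla q)$ as a sum of terms each bounded by $Ch^{k+1}(\|\bu\|_{k+1}+\|p\|_k)\|\bzeta_0\|$, and then to finish via the triangle inequality $\|\bu-\bu_0\|\leq\|\bu-Q_0\bu\|+\|\bzeta_0\|$ together with \eqref{error2}.

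Starting from the dual equation, I will apply elementwise integration by parts to $-(\bzeta_0,\Delta\bw)_T$ and $(\bzeta_0,\nabla q)_T$; because $\bzeta_b$ is single-valued across interior interfaces and vanishes on $\partial\Omega$, the $\bzeta_b$ traces paired with $\nabla\bw\cdot\bn$ and $q\bn$ cancel globally, so each boundary contribution can be recast as $\langle\bzeta_b-\bzeta_0,\cdot\rangle_{\partial T}$. Using \eqref{2.4new} and \eqref{divnew} I will convert $(\nabla\bzeta_0,\nabla\bw)_T$ and $(\nabla\cdot\bzeta_0,q)_T$ into $(\nabla_w\bzeta_h,\mathcal{Q}_h\nabla\bw)_T$ and $(\nabla_w\cdot\bzeta_h,q)_T$, collecting the mismatches into $\pm\ell_1(\bw,\bzeta_h)$ and $\pm\ell_2(\bzeta_h,q)$. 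The identity \eqref{pro1} gives $\mathcal{Q}_h\nabla\bw=\nabla_w\bw$, and splitting $\nabla_w\bw=\nabla_w Q_h\bw+\nabla_w(\bw-Q_h\bw)$ prepares for an application of the error equation \eqref{erroreqn} with $\bv_h=Q_h\bw\in V_h^0$; the key simplification is that \eqref{pro3} combined with $\nabla\cdot\bw=0$ yields $\nabla_w\cdot Q_h\bw=0$, so the pressure coupling disappears and $(\nabla_w e_{\bu_h},\nabla_w Q_h\bw)=\ell_1(\bu,Q_h\bw)+\ell_2(Q_h\bw,p)$. Writing $\bzeta_h=e_{\bu_h}+(Q_h\bu-\bu)$ then produces a clean decomposition of $\|\bzeta_0\|^2$ into seven summands: the two $\ell$-terms coming from the error equation, a residual $(\nabla_w(Q_h\bu-\bu),\nabla_w Q_h\bw)$, a cross term $(\nabla_w\bzeta_h,\nabla_w(\bw-Q_h\bw))$, and the three terms $-\ell_1(\bw,\bzeta_h)$, $-\ell_2(\bzeta_h,q)$, $-(\nabla_w\cdot\bzeta_h,q)$; for the last I will use the second error equation and the fact that $\int_\Omega\nabla_w\cdot\bzeta_h=0$ to replace $q$ by $q-Q_W q$, where $Q_W$ is the elementwise $L^2$ projection onto piecewise $P_{k-1}$.

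Almost all of these summands factor as a product of one $O(h^k)$ piece and one $O(h)$ piece. The $O(h^k)$ contribution comes from the primal approximation bounds in Lemma \ref{lemma1}, Lemma \ref{lemma2}, estimates \eqref{error1} and \eqref{error3}, and the $H^1$ bound \eqref{trinorm} which gives $\3bar\bzeta_h\3bar\leq Ch^k(\|\bu\|_{k+1}+\|p\|_k)$. The extra factor of $h$ arises from the dual approximation: $\3bar\bw-Q_h\bw\3bar\leq Ch\|\bw\|_2$ by Lemma \ref{lemma1} applied with polynomial degree one, the analogous $O(h)$ approximation bounds for $(I-\mathcal{Q}_h)\nabla\bw$ and $(I-\mathcal{Q}_h)q$, and $\|q-Q_W q\|\leq Ch\|q\|_1$, each controlled by $h\|\bzeta_0\|$ via the regularity assumption \eqref{regu2}. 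The main obstacle is the residual $(\nabla_w(Q_h\bu-\bu),\nabla_w Q_h\bw)$, where direct Cauchy--Schwarz only yields $O(h^k)$. I will extract the missing factor of $h$ by first writing $\nabla_w Q_h\bw=\nabla_w\bw+\nabla_w(Q_h\bw-\bw)$; the second piece gives $O(h^{k+1})$ immediately by pairing the two $H^1$-type bounds. For the first piece I will apply \eqref{2.4} to both $\nabla_w Q_h\bu$ and $\nabla_w\bu$ and subtract to obtain $-(Q_0\bu-\bu,\nabla\cdot\mathcal{Q}_h\nabla\bw)_T+\langle Q_b\bu-\bu,\mathcal{Q}_h\nabla\bw\cdot\bn\rangle_{\partial T}$, integrate the volume term back by parts, and insert $\pm\nabla\bw$ into the boundary terms. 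The global identity $\sum_T\langle Q_b\bu-\bu,\nabla\bw\cdot\bn\rangle_{\partial T}=0$ (continuity of $Q_b\bu$ and $\bu$ together with the vanishing boundary condition), combined with $Q_0\bu-Q_b\bu=-Q_b(\bu-Q_0\bu)$, ensures that every remaining contribution pairs an approximation factor of order $h^k$ with one of order $h$. Summing everything and dividing by $\|\bzeta_0\|$ completes the proof.
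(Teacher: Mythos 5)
Your proposal is correct and follows the same Aubin--Nitsche skeleton as the paper: the same duality setup, the same recasting of $(-\Delta\bw+\nabla q,\bzeta_0)$ via elementwise integration by parts into weak-gradient/weak-divergence pairings, the same use of the error equation with $\bv_h=Q_h\bw$ together with $\nabla_w\cdot Q_h\bw={\cal Q}_h(\nabla\cdot\bw)=0$ to remove the pressure coupling, and an equivalent eight-term decomposition (your seven summands differ from the paper's $I_1,\dots,I_8$ only in how the cross terms are grouped and in that you merge the paper's $I_6$ and $I_7$ into the single term $-(\nabla_w\cdot\bzeta_h,q)$). The one genuinely different step is the critical consistency term $(\nabla_w(\bu-Q_h\bu),\nabla_w\bw)$: the paper gains the extra factor of $h$ purely locally, by inserting the elementwise $L^2$ projection $Q_1$ of $\nabla_w\bw$ onto $[P_1(T)]^{d\times d}$ and observing that $(Q_1(\nabla_w\bw),\nabla_w(\bu-Q_h\bu))_T=0$ from \eqref{2.4}, so only $\|{\cal Q}_h(\nabla\bw)-Q_1{\cal Q}_h(\nabla\bw)\|\le Ch\|\bw\|_2$ survives; you instead unwind \eqref{2.4}, integrate back by parts, and invoke the global interface cancellation $\sum_T\langle Q_b\bu-\bu,\nabla\bw\cdot\bn\rangle_{\partial T}=0$. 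Both are valid; the paper's trick is more local and compact, while yours is the classical consistency-error computation and generalizes more transparently. Two small points to tighten when writing this up: (i) for $-(\nabla_w\cdot\bzeta_h,q-Q_Wq)$ you need a bound on $\|\nabla_w\cdot\bzeta_h\|=\|\nabla_w\cdot(Q_h\bu-\bu_h)\|$, which Lemma \ref{lemma2} alone does not supply; the cleanest fix is the identity $\nabla_w\cdot\bv=\operatorname{tr}(\nabla_w\bv)$ (take $\bvarphi=wI$ in \eqref{2.4}), which gives $\|\nabla_w\cdot\bzeta_h\|\le C\3bar\bzeta_h\3bar\le Ch^k(\|\bu\|_{k+1}+\|p\|_k)$ -- the paper glosses over the same point; (ii) the $O(h)$ factor in $\ell_1(\bu,Q_h\bw)$ and $\ell_2(Q_h\bw,p)$ comes from $\|Q_b\bw-Q_0\bw\|_{\partial T}$ via the trace inequality and \eqref{error2} with $m=1$, not from $\3bar\bw-Q_h\bw\3bar$ or $(I-{\cal Q}_h)\nabla\bw$ as your tool list suggests, though this is routine.
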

 
 \begin{proof}
 Testing the first equation in the dual problem   \eqref{dual} with  $\bzeta_0$ gives:
 \begin{equation}\label{e1}
 \begin{split}
 \|\bzeta_0\|^2 =&(- \Delta \bw+\nabla q, \bzeta_0).
 \end{split}
 \end{equation}

 Using \eqref{term1} with  $\bu=\bw$, $\bv_h=\bzeta_h$, we have:
$$
\sum_{T\in {\cal T}_h}(- \Delta \bw, \bzeta_0)_T=\sum_{T\in {\cal T}_h} (\nabla_w\bw, \nabla_w \bzeta_h)_T-\langle \bzeta_b-\bzeta_0, ({\cal Q}_h-I)\nabla \bw\cdot\bn \rangle_{\partial T}.
$$

Similarly, using  \eqref{termm} with $p=q$ and $\bv_h=\bzeta_h$,  we obtain:
$$
\sum_{T\in {\cal T}_h} (\nabla q, \zeta_0)=\sum_{T\in {\cal T}_h} -(\nabla_w \cdot \bzeta_h, {\cal Q}_hq)_T+\langle ({\cal Q}_h-I)q, (\bzeta_b-\bzeta_0)\cdot\bn\rangle_{\partial T}.
$$
Substituting these into \eqref{e1} gives:
\begin{equation}\label{e2}
 \begin{split}
 \|\bzeta_0\|^2 =&\sum_{T\in {\cal T}_h} (\nabla_w\bw, \nabla_w \bzeta_h)_T-\langle \bzeta_b-\bzeta_0, ({\cal Q}_h-I)\nabla \bw\cdot\bn \rangle_{\partial T}\\&-(\nabla_w \cdot \bzeta_h, {\cal Q}_hq)_T+\langle ({\cal Q}_h-I)q, (\bzeta_b-\bzeta_0)\cdot\bn\rangle_{\partial T}.
 \end{split}
 \end{equation}

Using the second inequality in \eqref{dual} and \eqref{pro3}, we find: 
  \begin{equation}\label{add}
 \begin{split}
  \sum_{T\in {\cal T}_h}(\nabla_w\cdot Q_h\bw, p-p_h)_T=  \sum_{T\in {\cal T}_h} ({\cal Q}_h(\nabla\cdot \bw), p-p_h)_T=0.
  \end{split}
 \end{equation}

  Thus, using \eqref{add}  and  \eqref{erroreqn} with $\bv_h=Q_h\bw$ and $q_h={\cal Q}_hq$ gives

  \begin{equation}\label{e3}
 \begin{split}
 &\|\bzeta_0\|^2 \\=&\sum_{T\in {\cal T}_h} (\nabla_w\bw, \nabla_w (\bu-\bu_h))_T -(\nabla_w\bw, \nabla_w (\bu-Q_h\bu))_T\\&-\langle \bzeta_b-\bzeta_0, ({\cal Q}_h-I)\nabla \bw\cdot\bn \rangle_{\partial T} -(\nabla_w \cdot (\bu-\bu_h), {\cal Q}_hq)_T\\&-(\nabla_w \cdot (Q_h\bu-\bu), {\cal Q}_hq)_T+\langle ({\cal Q}_h-I)q, (\bzeta_b-\bzeta_0)\cdot\bn\rangle_{\partial T}\\
 =&\sum_{T\in {\cal T}_h} (\nabla_wQ_h\bw, \nabla_w (\bu-\bu_h))_T+(\nabla_w (\bw-Q_h\bw), \nabla_w (\bu-\bu_h))_T\\
 &- (\nabla_w  \bw, \nabla_w (\bu-Q_h\bu))_T-\langle \bzeta_b-\bzeta_0, ({\cal Q}_h-I)\nabla \bw\cdot\bn \rangle_{\partial T}\\&-(\nabla_w \cdot (\bu-\bu_h), {\cal Q}_hq)_T-(\nabla_w \cdot (Q_h\bu-\bu), {\cal Q}_hq)_T\\
 &+\langle ({\cal Q}_h-I)q, (\bzeta_b-\bzeta_0)\cdot\bn\rangle_{\partial T} -(\nabla_w\cdot Q_h\bw, p-p_h)_T\\
 =& \ell_1(\bu, Q_h\bw) +\ell_2(Q_h\bw, p) +\sum_{T\in {\cal T}_h}(\nabla_w (\bw-Q_h\bw), \nabla_w (\bu-\bu_h))_T\\
 &- (\nabla_w  \bw, \nabla_w (\bu-Q_h\bu))_T-\langle \bzeta_b-\bzeta_0, ({\cal Q}_h-I)\nabla \bw\cdot\bn \rangle_{\partial T}\\&-(\nabla_w \cdot (\bu-\bu_h),  q)_T-(\nabla_w \cdot (Q_h\bu-\bu),  q)_T\\
 &+\langle ({\cal Q}_h-I)q, (\bzeta_b-\bzeta_0)\cdot\bn\rangle_{\partial T}\\
 =& \sum_{i=1}^8 I_i.
 \end{split}
 \end{equation}

Each term $I_i$ for $i=1, \cdots, 8$ is estimated as follows:
 
For $I_1$, using the Cauchy-Schwarz inequality, the trace inequality \eqref{tracein},   the  estimate  \eqref{error2} with $m=1$,  the  estimate  \eqref{error3} with $n=k$, we have
\begin{equation*} 
 \begin{split}
&|\ell_1(\bu, Q_h\bw)|\\=&|\sum_{T\in {\cal T}_h}  
  \langle Q_b\bw-Q_0\bw,   ({\cal Q}_h-I) \nabla \bu \cdot \bn \rangle_{\partial T}|\\
  \leq & (\sum_{T\in {\cal T}_h}  
  \| Q_b\bw-Q_0\bw\|^2_{\partial T})^
{\frac{1}{2}} (\sum_{T\in {\cal T}_h}\| ({\cal Q}_h-I) \nabla \bu \cdot \bn \|^2_{\partial T})
\\
\leq & (\sum_{T\in {\cal T}_h}  
  h_T^{-1}\|  \bw-Q_0\bw\|^2_{T}+h_T\|  \bw-Q_0\bw\|^2_{1, T})^
{\frac{1}{2}}\\
&\cdot(\sum_{T\in {\cal T}_h}  h_T^{-1}\| ({\cal Q}_h-I) \nabla \bu \cdot \bn \|^2_{T}+h_T \| ({\cal Q}_h-I) \nabla \bu \cdot \bn \|^2_{1, T})\\
\leq & Ch^{-1}h^2\|\bw\|_2 h^k\|\bu\|_{k+1}.
  \end{split}
 \end{equation*}

For $I_2$, applying the Cauchy-Schwarz inequality, the trace inequality \eqref{tracein},   the   estimate  \eqref{error1} with $n=k$,  the   estimate  \eqref{error2} with $m=1$, we get
\begin{equation*} 
 \begin{split}
&|\ell_2(Q_h\bw, p)| \\=&|\sum_{T\in {\cal T}_h} -\langle ({\cal Q}_h -I)p, (Q_b\bw-Q_0\bw)\cdot \bn \rangle_{\partial T}|\\
\leq & (\sum_{T\in {\cal T}_h}\| ({\cal Q}_h -I)p\|_{\partial T}^2)^\frac{1}{2}(\sum_{T\in {\cal T}_h} \|Q_b\bw-Q_0\bw)\cdot \bn \|_{\partial T}^2)^\frac{1}{2}\\
\leq & (\sum_{T\in {\cal T}_h}h_T^{-1}\| ({\cal Q}_h -I)p\|_{T}^2+h_T\| ({\cal Q}_h -I)p\|_{1, T}^2)^\frac{1}{2}\\&\cdot (\sum_{T\in {\cal T}_h} h_T^{-1}\|(\bw-Q_0\bw)\cdot \bn \|_{T}^2+h_T \|(\bw-Q_0\bw)\cdot \bn \|_{1, T}^2)^\frac{1}{2}\\
\leq &Ch^{-1}h^2\|\bw\|_2 h^k\|p\|_{k}.
 \end{split}
 \end{equation*} 

For $I_3$, using the Cauchy-Schwarz inequality,  the error estimate \eqref{trinorm}, the error estimate   \eqref{erroresti1} with $k=1$,  we derive
\begin{equation*} 
 \begin{split}
&|\sum_{T\in {\cal T}_h}(\nabla_w (\bw-Q_h\bw), \nabla_w (\bu-\bu_h))_T|\\
\leq & \3bar \bw-Q_h\bw\3bar \3bar \bu-\bu_h\3bar\\ \leq & Ch\|\bw\|_{2}  h^k(\|\bu\|_{k+1}+\|p\|_k).
 \end{split}
 \end{equation*}

We now estimate $I_4$. Let 
 $Q_1$ denote  the $L^2$ projection onto $[P_1(T)]^{d\times d}$. For any $T\in {\cal T}_h$, it follows from \eqref{2.4} that
$$
(Q_1 (\nabla_w  \bw), \nabla_w (\bu-Q_h\bu))_T= -( \bu-Q_0\bu, \nabla\cdot(Q_1(\nabla_w  \bw)))_T+\langle \bu-Q_b\bu, Q_1(\nabla_w  \bw) \cdot\bn\rangle_{\partial T}=0.
$$
Using this identity along with the Cauchy-Schwarz inequality,   \eqref{pro1}, and \eqref{erroresti1}, we have 
\begin{equation*}
\begin{split}
&|\sum_{T\in {\cal T}_h}(\nabla_w  \bw, \nabla_w (\bu-Q_h\bu))_T| \\
=&|\sum_{T\in {\cal T}_h}(\nabla_w  \bw-Q_1 (\nabla_w  \bw), \nabla_w (\bu-Q_h\bu))_T| \\
=&|\sum_{T\in {\cal T}_h}({\cal Q}_h(\nabla   \bw)-Q_1 ({\cal Q}_h(\nabla   \bw)), \nabla_w (\bu-Q_h\bu))_T|  \\
\leq & (\sum_{T\in {\cal T}_h}\|{\cal Q}_h(\nabla   \bw)-Q_1 ({\cal Q}_h(\nabla   \bw))\|_T^2)^\frac{1}{2}(\sum_{T\in {\cal T}_h}\| \nabla_w (\bu-Q_h\bu)\|_T^2)^\frac{1}{2}\\
\leq & Ch\|{\cal Q}_h(\nabla   \bw)\|_1 h^k\|\bu\|_{k+1}
\\ \leq & Ch^{k+1}\|\bw\|_2  \|\bu\|_{k+1}.
 \end{split}
 \end{equation*}

  For $I_5$, applying the Cauchy-Schwarz inequality, the trace inequality \eqref{tracein}, the norm equivalence \eqref{normeq}, \eqref{error3} with $n=1$, the triangle inequality,  and the error estimates  \eqref{erroresti1} and \eqref{trinorm}, we have
\begin{equation*}
\begin{split}
&|\sum_{T\in {\cal T}_h}\langle \bzeta_b-\bzeta_0, ({\cal Q}_h-I)\nabla \bw\cdot\bn \rangle_{\partial T}|\\
\leq & (\sum_{T\in {\cal T}_h}h_T^{-1}\|\bzeta_b-\bzeta_0\|_{\partial T}^2)^\frac{1}{2}  (\sum_{T\in {\cal T}_h} h_T\|({\cal Q}_h-I)\nabla \bw\cdot\bn \|_{\partial T}^2)^\frac{1}{2}\\
\leq & \|\bzeta_h\|_{1, h} (\sum_{T\in {\cal T}_h} \|({\cal Q}_h-I)\nabla \bw\cdot\bn \|_{T}^2+h_T^2\|({\cal Q}_h-I)\nabla \bw\cdot\bn \|_{1,T}^2)^\frac{1}{2}\\
 \leq & \3bar \bzeta_h\3bar  h\|\bw\|_2
 \\
  \leq & (\3bar Q_h\bu-\bu\3bar+\3bar \bu-\bu_h\3bar ) h\|\bw\|_2\\
 \leq & Ch^{k+1}(\|\bu\|_{k+1}+\|p\|_k)\|\bw\|_2.
 \end{split}
 \end{equation*} 

Now we estimate  $I_6$. First, using the second equation in \eqref{model},  \eqref{pro2}-\eqref{pro3}, we have 
\begin{equation}\label{qq}
    \nabla_w \cdot \bu={\cal Q}_h \nabla  \cdot \bu=0, \qquad \nabla_w \cdot Q_h\bu={\cal Q}_h \nabla  \cdot \bu=0.
\end{equation}
Let  ${\cal Q}_h^{k-1}$ denote  the $L^2$ projection onto $P_{k-1}(T)$.  
Using the second equation in  \eqref{erroreqn}, the Cauchy-Schwarz inequality, the estimate \eqref{error1} with $n=1$, \eqref{qq},  the error estimates \eqref{erroresti2},
\begin{equation*}
\begin{split}
& |\sum_{T\in {\cal T}_h}|(\nabla_w \cdot (\bu-\bu_h),  q)_T|\\
=&|\sum_{T\in {\cal T}_h}|(\nabla_w \cdot (\bu-\bu_h),  q-{\cal Q}_h^{k-1}q)_T|\\
=&|\sum_{T\in {\cal T}_h}| ( \nabla_w \cdot (Q_h\bu-\bu_h),  q-{\cal Q}_h^{k-1}q )_T|\\
\leq & (\sum_{T\in {\cal T}_h}\|\nabla_w \cdot (Q_h\bu-\bu_h)\|_T^2)^{\frac{1}{2}}(\sum_{T\in {\cal T}_h}\|q-{\cal Q}_h^{k-1}q\|_T^2)^{\frac{1}{2}}\\
\leq & Ch^{k}\|\bu\|_{k+1}h \|q\|_1.
\end{split}
 \end{equation*} 

For $I_7$, let  $Q^0$ denote  the $L^2$ projection onto $P_0(T)$. For any $T\in {\cal T}_h$, we have from \eqref{div},
$$
(Q^0 q, \nabla_w \cdot ( Q_h\bu-\bu ))_T= -( Q_0\bu- \bu, \nabla (Q^0 q))_T+\langle  Q_b\bu-\bu, Q^0 q\cdot\bn\rangle_{\partial T}=0,
$$
which, combined with the  Cauchy-Schwarz inequality and the error estimate \eqref{erroresti2}, yields
\begin{equation*}
\begin{split}
& |\sum_{T\in {\cal T}_h}(\nabla_w \cdot (Q_h\bu-\bu),  q)_T|\\
\leq &  |\sum_{T\in {\cal T}_h}(\nabla_w \cdot (Q_h\bu-\bu),  q-Q^0q)_T|
\\
\leq & (\sum_{T\in {\cal T}_h}\|\nabla_w \cdot (Q_h\bu-\bu)\|_T^2)^\frac{1}{2}  (\sum_{T\in {\cal T}_h}\| q-Q^0q\|_T^2)^\frac{1}{2}\\
\leq & Ch^k\|\bu\|_{k+1 } h\|q\|_1.
 \end{split}
 \end{equation*}

Finally, for $I_8$, using the Cauchy-Schwarz inequality,  the trace inequality \eqref{tracein}, the norm equivalence \eqref{normeq}, the estimate \eqref{error1} with $n=1$, the triangle inequality, and  the error estimates  \eqref{erroresti1} and  \eqref{trinorm},  we get
\begin{equation*}
\begin{split}
&|\sum_{T\in {\cal T}_h}\langle ({\cal Q}_h-I)q, (\bzeta_b-\bzeta_0)\cdot\bn\rangle_{\partial T}|\\
 \leq&  (\sum_{T\in {\cal T}_h}h_T\|({\cal Q}_h-I)q\|_{\partial T}  ^2)^\frac{1}{2}  (\sum_{T\in {\cal T}_h}h_T^{-1}\|(\bzeta_b-\bzeta_0)\cdot\bn\|_{\partial T}^2)^\frac{1}{2} \\
  \leq&  (\sum_{T\in {\cal T}_h} \|({\cal Q}_h-I)q\|_{T}  ^2+h_T^2\|({\cal Q}_h-I)q\|_{1, T}  ^2)^\frac{1}{2}  \|\bzeta_h\|_{1, h}\\
  \leq &  Ch\|q\|_1 (\3bar Q_h\bu-\bu\3bar+\3bar  \bu-\bu_h\3bar)\\
   \leq &  Ch\|q\|_1 h^k(\|\bu\|_{k+1}+\|p\|_k). \\
   \end{split}
 \end{equation*} 

Substituting the estimates for $I_i$ ($i=1, \cdots, 8$) into \eqref{e3} and using the regularity assumption \eqref{regu2} 
gives
\begin{equation*}
\|\bzeta_0\|\leq Ch^{k+1}(\|\bu\|_{k+1}+\|p\|_k),
\end{equation*}
which, together with the triangle inequality, leads to
\begin{equation*}
\|\bu-\bu_0\|\leq \|\bu-Q_0\bu\|+\|\bzeta_0\|\leq Ch^{k+1}(\|\bu\|_{k+1}+\|p\|_k).
\end{equation*}

This completes the proof of the theorem. 
\end{proof}

\section{Numerical tests}
 
In the 2D computations,  we solve the stationary Stokes equations \eqref{model} in
  the unit square domain $\Omega=(0,1)\times(0,1)$ with the exact solution
\an{\label{s-1} \b u=\p{-\partial_y g(x,y) \\ \partial_x g(x,y)}, \quad 
      p=(y-\frac 12)^3, }
where $g(x,y)=2^4 (x-x^2)^2 (y-y^2)^2$.

\begin{figure}[H] \centering
  \begin{picture}(420,100)(0,20)
  \put(-10,-445){\includegraphics[width=410pt]{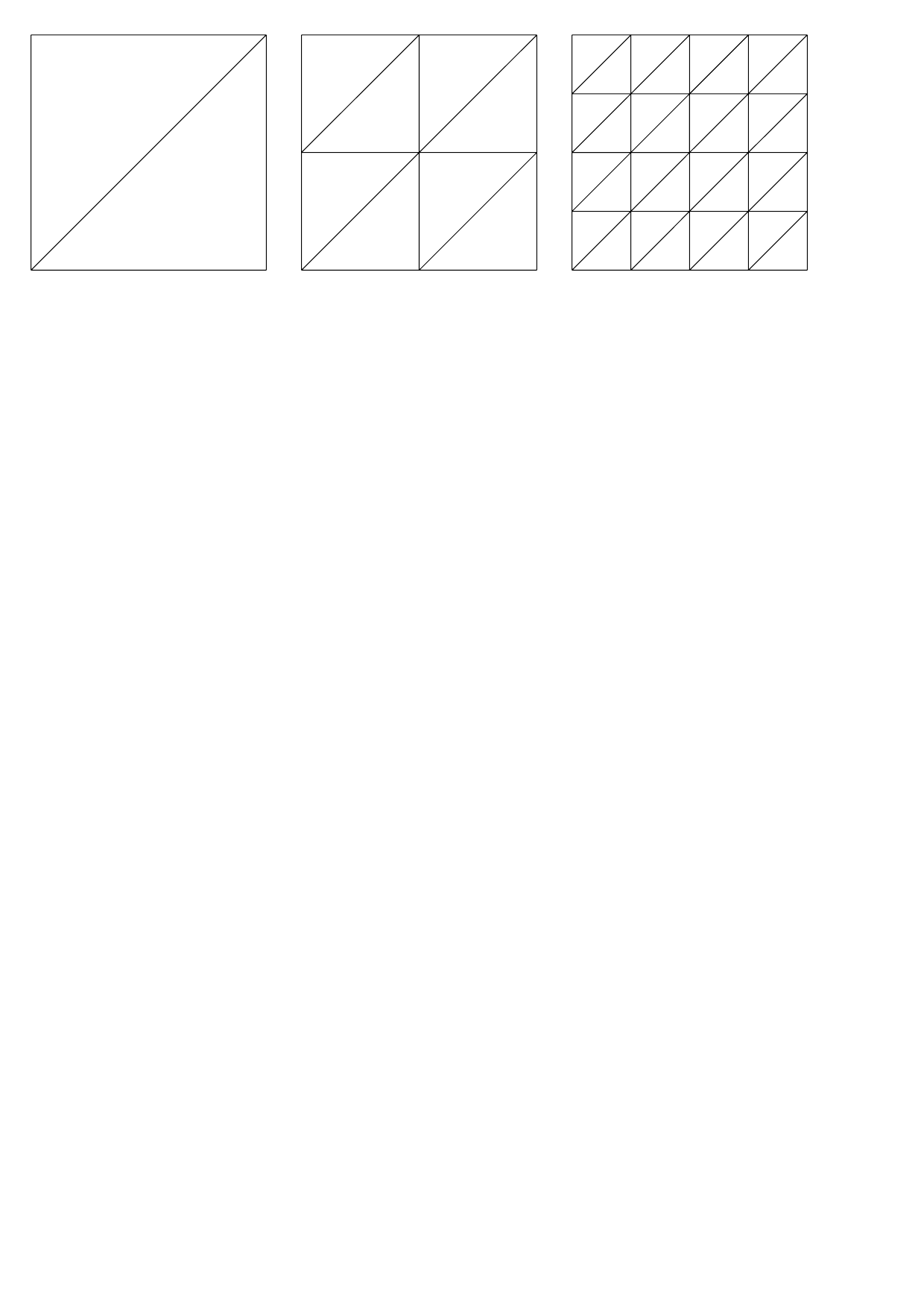}}
  \end{picture}
  \caption{The first three meshes $G_1$--$G_3$ for Table \ref{t2-1} } \label{g2-1}
	\end{figure}

We list the computational results for computing \eqref{s-1} on triangular meshes shown in
   Figure \ref{g2-1}, by the weak Galerkin finite element methods,  in Table \ref{t2-1}.

  \begin{table}[H]
  \caption{ Error profile for computing \eqref{s-1} on Figure \ref{g2-1} meshes.} \label{t2-1}
\begin{center}  
   \begin{tabular}{c|rr|rr|rr}  
 \hline 
$G_i$ & \multicolumn{2}{c|}{$\|\b u-\b u_h\|_0$ $O(h^r)$} & 
        \multicolumn{2}{c|}{$\|\nabla_w(\b u-\b u_h)\|_0$ $O(h^r)$}  & 
   \multicolumn{2}{c}{$\|Q_0p-p_h\|_0$  $O(h^r)$ } \\ \hline  
     & \multicolumn{6}{c}{By the $(P_1,P_1)$-$P_0$ weak Galerkin finite element}  \\ 
 6&     0.819E-4 &  2.0&     0.926E-2 &  1.0&     0.431E-2 &  1.0 \\
 7&     0.206E-4 &  2.0&     0.463E-2 &  1.0&     0.216E-2 &  1.0 \\
 8&     0.516E-5 &  2.0&     0.231E-2 &  1.0&     0.108E-2 &  1.0 \\
 \hline 
     & \multicolumn{6}{c}{By the $(P_2,P_2)$-$P_1$ weak Galerkin finite element}  \\ 
 4&     0.419E-4 &  3.2&     0.422E-2 &  1.9&     0.206E-2 &  1.9 \\
 5&     0.477E-5 &  3.1&     0.106E-2 &  2.0&     0.521E-3 &  2.0 \\
 6&     0.574E-6 &  3.1&     0.267E-3 &  2.0&     0.129E-3 &  2.0 \\
 \hline 
\end{tabular} \end{center}  \end{table}

\begin{figure}[H] \centering
  \begin{picture}(420,100)(0,20)
  \put(-10,-445){\includegraphics[width=410pt]{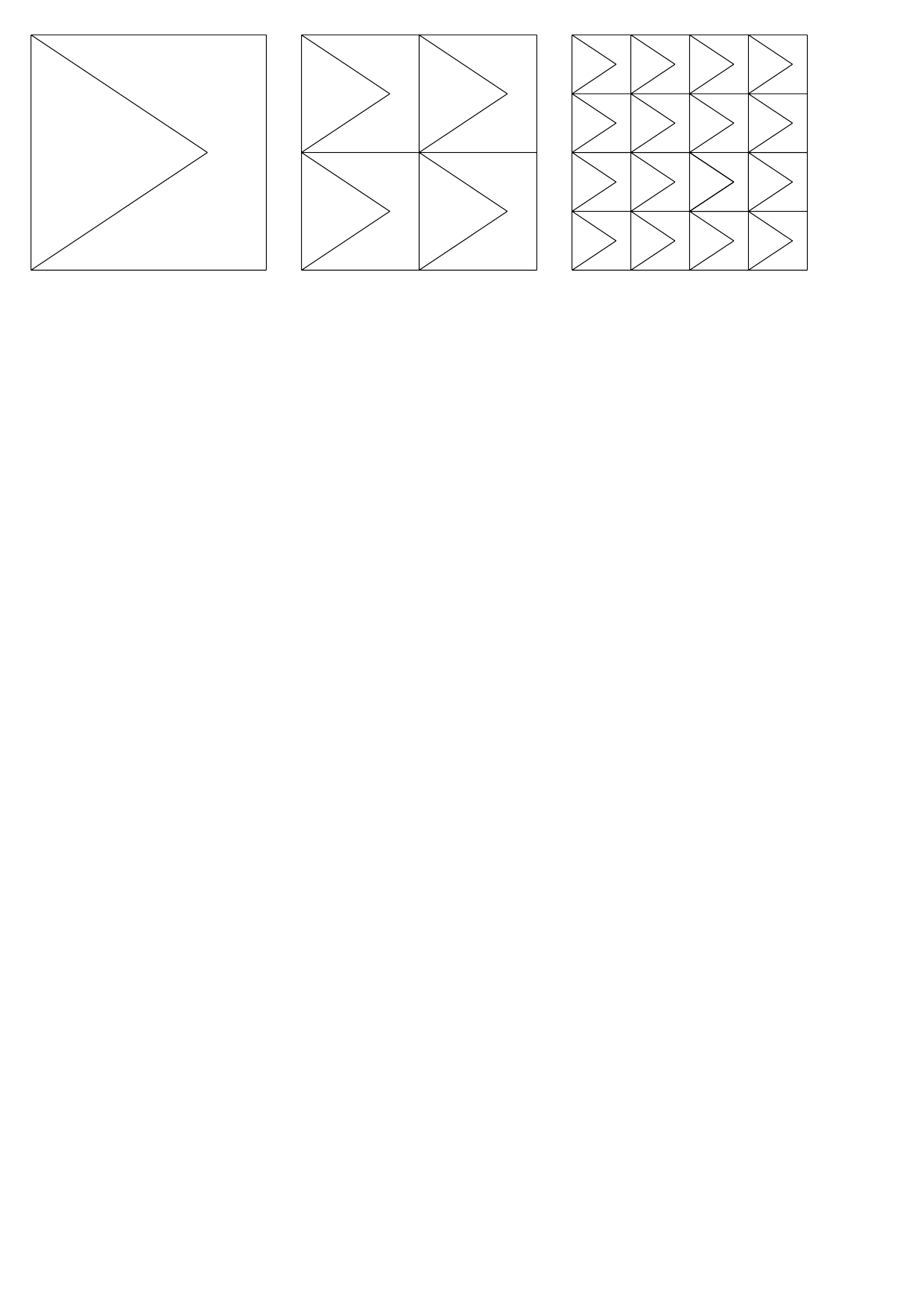}}
  \end{picture}
  \caption{The first three meshes $G_1$--$G_3$ for Table \ref{t2-2} } \label{g2-2}
	\end{figure}

We list the computational results for computing \eqref{s-1} on nonconvex polygonal meshes shown in
   Figure \ref{g2-2}, by the weak Galerkin finite element methods,  in Table \ref{t2-2}.

  \begin{table}[H]
  \caption{ Error profile for computing \eqref{s-1} on Figure \ref{g2-2} meshes.} \label{t2-2}
\begin{center}  
   \begin{tabular}{c|rr|rr|rr}  
 \hline 
$G_i$ & \multicolumn{2}{c|}{$\|\b u-\b u_h\|_0$ $O(h^r)$} & 
        \multicolumn{2}{c|}{$\|\nabla_w(\b u-\b u_h)\|_0$ $O(h^r)$}  & 
   \multicolumn{2}{c}{$\|Q_0p-p_h\|_0$  $O(h^r)$ } \\ \hline  
     & \multicolumn{6}{c}{By the $(P_1,P_1)$-$P_0$ weak Galerkin finite element}  \\ 
 6&     0.256E-3 &  1.9&     0.187E-1 &  1.0&     0.129E-2 &  1.7 \\
 7&     0.648E-4 &  2.0&     0.932E-2 &  1.0&     0.393E-3 &  1.7 \\
 8&     0.163E-4 &  2.0&     0.466E-2 &  1.0&     0.140E-3 &  1.5 \\
 \hline 
     & \multicolumn{6}{c}{By the $(P_2,P_2)$-$P_1$ weak Galerkin finite element}  \\ 
 4&     0.822E-4 &  3.4&     0.109E-1 &  2.9&     0.272E-2 &  2.0 \\
 5&     0.857E-5 &  3.3&     0.257E-2 &  2.1&     0.608E-3 &  2.2 \\
 6&     0.975E-6 &  3.1&     0.646E-3 &  2.0&     0.145E-3 &  2.1 \\
 \hline 
     & \multicolumn{6}{c}{By the $(P_3,P_3)$-$P_2$ weak Galerkin finite element}  \\ 
 2&     0.924E-2 &  6.0&     0.649E+0 &  4.4&     0.125E-1 &  1.3 \\
 3&     0.164E-3 &  5.8&     0.211E-1 &  4.9&     0.270E-2 &  2.2 \\
 4&     0.564E-5 &  4.9&     0.106E-2 &  4.3&     0.352E-3 &  2.9 \\
 \hline 
\end{tabular} \end{center}  \end{table}

\begin{figure}[H] \centering
  \begin{picture}(420,100)(0,20)
  \put(-10,-445){\includegraphics[width=410pt]{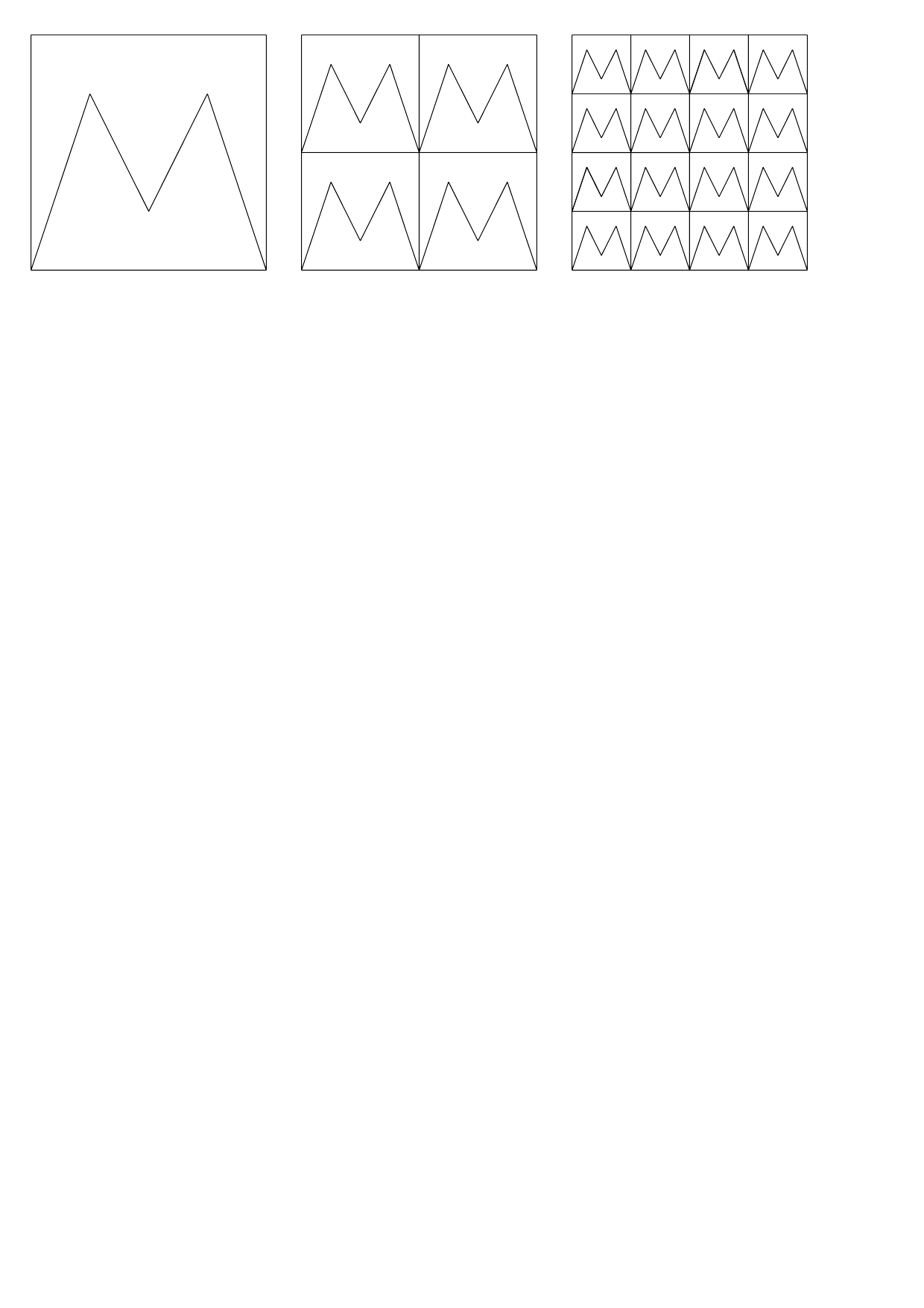}}
  \end{picture}
  \caption{The first three meshes $G_1$--$G_3$ for Table \ref{t2-3} } \label{g2-3}
	\end{figure}

We list the computational results for computing \eqref{s-1} on nonconvex polygonal meshes shown in
   Figure \ref{g2-3}, by the weak Galerkin finite element methods,  in Table \ref{t2-3}.

  \begin{table}[H]
  \caption{ Error profile for computing \eqref{s-1} on Figure \ref{g2-3} meshes.} \label{t2-3}
\begin{center}  
   \begin{tabular}{c|rr|rr|rr}  
 \hline 
$G_i$ & \multicolumn{2}{c|}{$\|\b u-\b u_h\|_0$ $O(h^r)$} & 
        \multicolumn{2}{c|}{$\|\nabla_w(\b u-\b u_h)\|_0$ $O(h^r)$}  & 
   \multicolumn{2}{c}{$\|Q_0p-p_h\|_0$  $O(h^r)$ } \\ \hline  
     & \multicolumn{6}{c}{By the $(P_1,P_1)$-$P_0$ weak Galerkin finite element}  \\ 
 5&     0.448E-3 &  1.9&     0.440E-1 &  1.0&     0.253E-2 &  1.5 \\
 6&     0.114E-3 &  2.0&     0.221E-1 &  1.0&     0.988E-3 &  1.4 \\
 7&     0.285E-4 &  2.0&     0.110E-1 &  1.0&     0.443E-3 &  1.2 \\
 \hline 
     & \multicolumn{6}{c}{By the $(P_2,P_2)$-$P_1$ weak Galerkin finite element}  \\ 
 4&     0.766E-4 &  3.0&     0.116E-1 &  2.0&     0.138E-2 &  2.3 \\
 5&     0.909E-5 &  3.1&     0.294E-2 &  2.0&     0.259E-3 &  2.4 \\
 6&     0.110E-5 &  3.0&     0.739E-3 &  2.0&     0.550E-4 &  2.2 \\
 \hline 
\end{tabular} \end{center}  \end{table}

\begin{figure}[H] \centering
  \begin{picture}(420,100)(0,20)
  \put(-10,-445){\includegraphics[width=410pt]{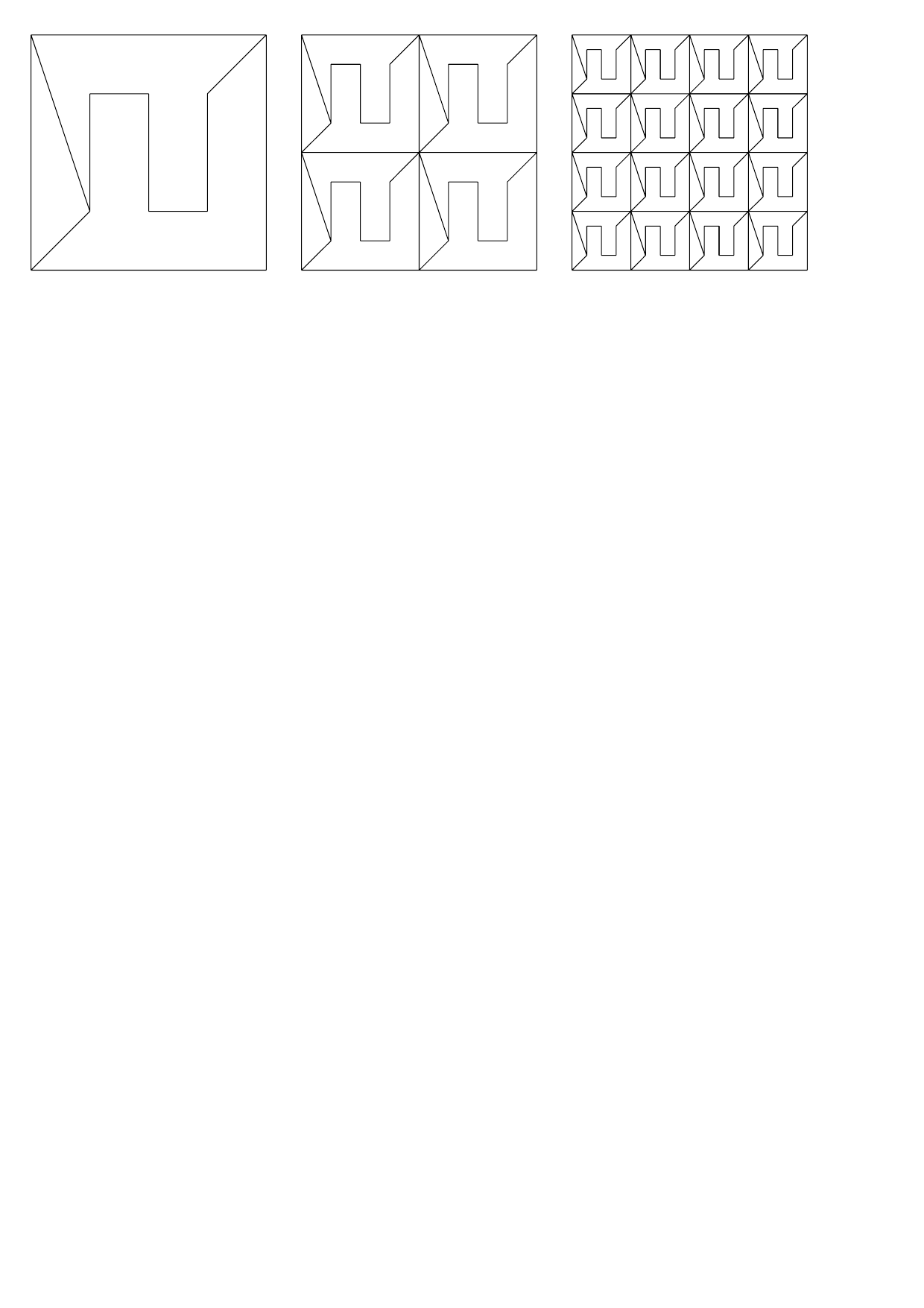}}
  \end{picture}
  \caption{The first three meshes $G_1$--$G_3$ for Table \ref{t2-4} } \label{g2-4}
	\end{figure}

We list the computational results for computing \eqref{s-1} on very nonconvex polygonal meshes shown in
   Figure \ref{g2-4}, by the weak Galerkin finite element methods,  in Table \ref{t2-4}.

  \begin{table}[H]
  \caption{ Error profile for computing \eqref{s-1} on Figure \ref{g2-4} meshes.} \label{t2-4}
\begin{center}  
   \begin{tabular}{c|rr|rr|rr}  
 \hline 
$G_i$ & \multicolumn{2}{c|}{$\|\b u-\b u_h\|_0$ $O(h^r)$} & 
        \multicolumn{2}{c|}{$\|\nabla_w(\b u-\b u_h)\|_0$ $O(h^r)$}  & 
   \multicolumn{2}{c}{$\|Q_0p-p_h\|_0$  $O(h^r)$ } \\ \hline  
     & \multicolumn{6}{c}{By the $(P_1,P_1)$-$P_0$ weak Galerkin finite element}  \\ 
 5&     0.365E-3 &  1.9&     0.508E-1 &  1.0&     0.311E-2 &  1.4 \\
 6&     0.927E-4 &  2.0&     0.255E-1 &  1.0&     0.135E-2 &  1.2 \\
 7&     0.233E-4 &  2.0&     0.128E-1 &  1.0&     0.642E-3 &  1.1 \\
 \hline 
     & \multicolumn{6}{c}{By the $(P_2,P_2)$-$P_1$ weak Galerkin finite element}  \\ 
 3&     0.130E-2 &  5.0&     0.132E+0 &  3.9&     0.577E-2 &  3.2 \\
 4&     0.898E-4 &  3.9&     0.131E-1 &  3.3&     0.124E-2 &  2.2 \\
 5&     0.111E-4 &  3.0&     0.271E-2 &  2.3&     0.272E-3 &  2.2 \\
 \hline 
\end{tabular} \end{center}  \end{table}

In the 3D computations,  we solve the stationary Stokes equations \eqref{model} on the
  domain $\Omega=(0,1)^3$.  The exact solution is chosen as
\an{\label{s-2} \b u=\p{e^y \\ e^z \\ e^x}, \quad \ p =y-\frac 12. }

\begin{figure}[H] \centering
  \begin{picture}(420,100)(0,20)
  \put(-10,-445){\includegraphics[width=410pt]{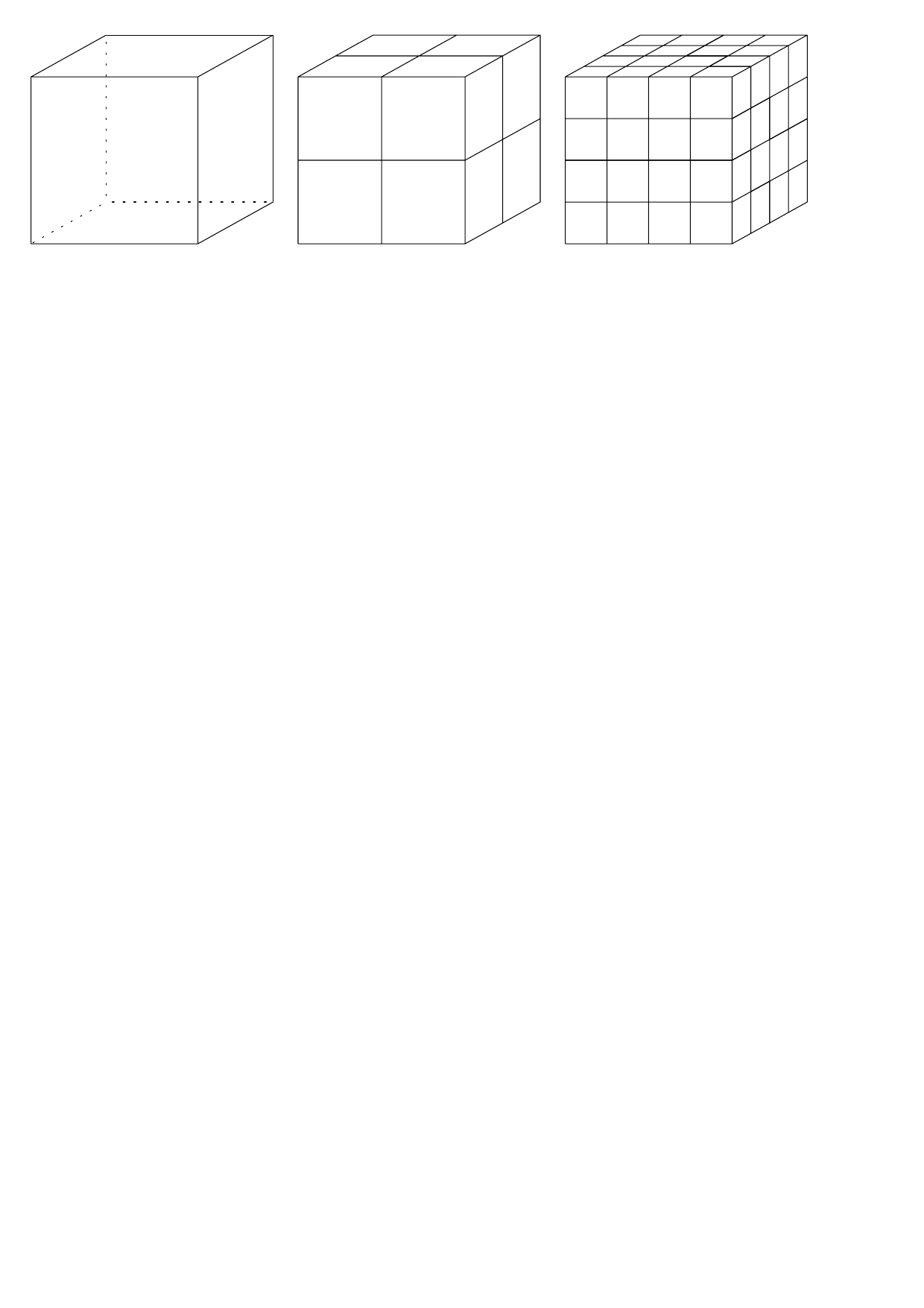}}
  \end{picture}
  \caption{The first three meshes $G_1$--$G_3$ for Table \ref{t3-1} } \label{g3-1}
	\end{figure}

We list the computational results for computing \eqref{s-2} on cubic meshes shown in
   Figure \ref{g3-1}, by the weak Galerkin finite element methods,  in Table \ref{t3-1}.

  \begin{table}[H]
  \caption{ Error profile for computing \eqref{s-2} on Figure \ref{g3-1} meshes.} \label{t3-1}
\begin{center}  
   \begin{tabular}{c|rr|rr|rr}  
 \hline 
$G_i$ & \multicolumn{2}{c|}{$\|\b u-\b u_h\|_0$ $O(h^r)$} & 
        \multicolumn{2}{c|}{$\|\nabla_w(\b u-\b u_h)\|_0$ $O(h^r)$}  & 
   \multicolumn{2}{c}{$\|Q_0p-p_h\|_0$  $O(h^r)$ } \\ \hline  
     & \multicolumn{6}{c}{By the $(P_1,P_1)$-$P_0$ weak Galerkin finite element}  \\ 
 3 &    0.494E-2 &1.82 &    0.277E+0 &0.89 &    0.108E+0 &1.36 \\
 4 &    0.122E-2 &2.02 &    0.142E+0 &0.96 &    0.321E-1 &1.75 \\
 5 &    0.301E-3 &2.02 &    0.718E-1 &0.99 &    0.899E-2 &1.83 \\
 \hline 
     & \multicolumn{6}{c}{By the $(P_2,P_2)$-$P_1$ weak Galerkin finite element}  \\ 
 2 &    0.109E-2 &2.48 &    0.284E-1 &1.75 &    0.779E-2 &0.00 \\
 3 &    0.144E-3 &2.92 &    0.745E-2 &1.93 &    0.981E-3 &2.99 \\
 4 &    0.183E-4 &2.97 &    0.190E-2 &1.97 &    0.139E-3 &2.82 \\
 \hline 
     & \multicolumn{6}{c}{By the $(P_3,P_3)$-$P_2$ weak Galerkin finite element}  \\ 
 1 &    0.384E-3 &0.00 &    0.804E-2 &0.00 &    0.396E-2 &0.00 \\
 2 &    0.260E-4 &3.88 &    0.114E-2 &2.82 &    0.168E-3 &4.56 \\
 3 &    0.175E-5 &3.89 &    0.147E-3 &2.96 &    0.114E-4 &3.88 \\
 \hline 
\end{tabular} \end{center}  \end{table}

\begin{figure}[H] \centering
  \begin{picture}(420,100)(0,20)
  \put(-10,-445){\includegraphics[width=410pt]{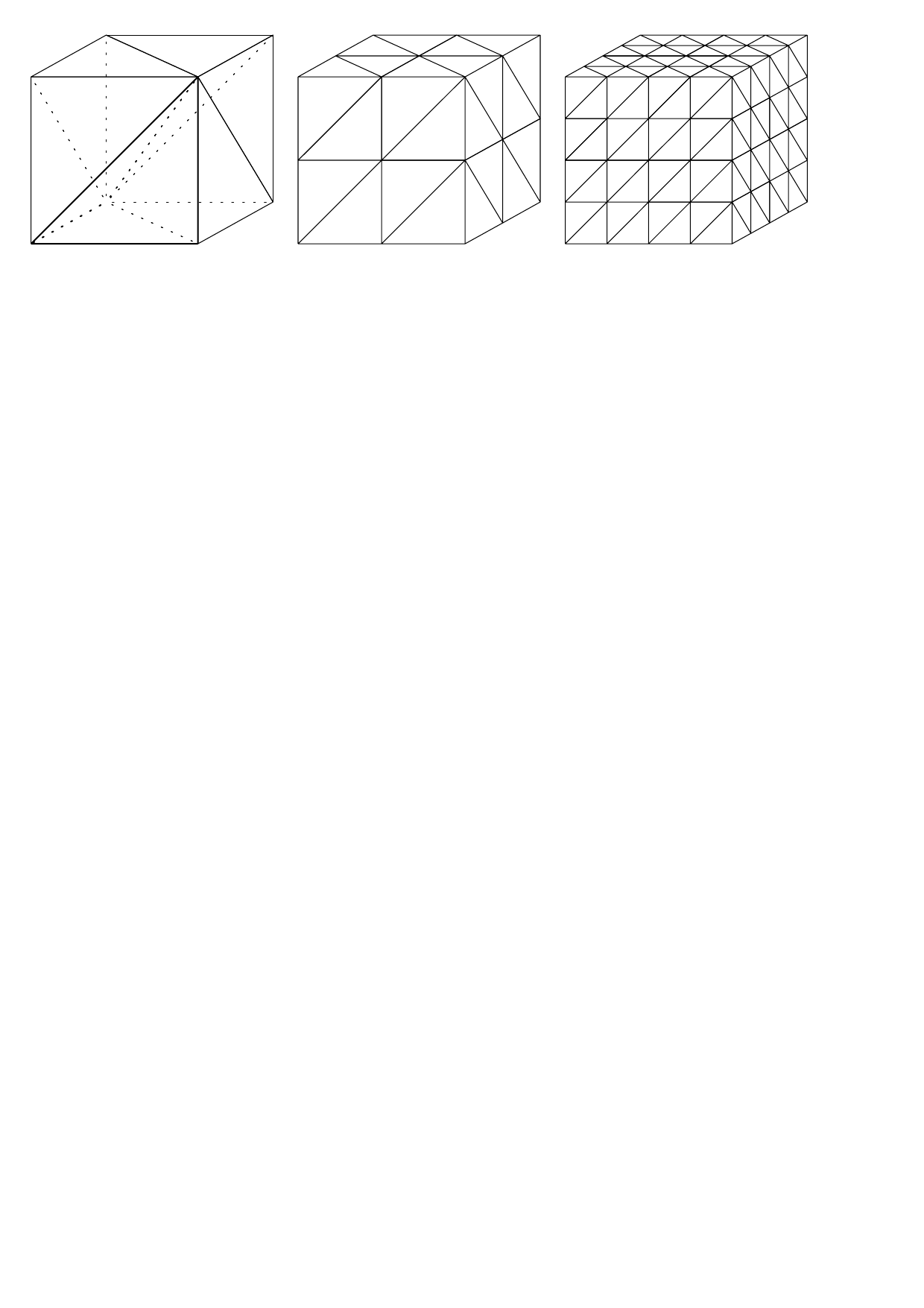}}
  \end{picture}
  \caption{The first three meshes $G_1$--$G_3$ for Table \ref{t3-2} } \label{g3-2}
	\end{figure}

We list the computational results for computing \eqref{s-2} on tetrahedral meshes shown in
   Figure \ref{g3-2}, by the weak Galerkin finite element methods,  in Table \ref{t3-2}.

  \begin{table}[H]
  \caption{ Error profile for computing \eqref{s-2} on Figure \ref{g3-2} meshes.} \label{t3-2}
\begin{center}  
   \begin{tabular}{c|rr|rr|rr}  
 \hline 
$G_i$ & \multicolumn{2}{c|}{$\|\b u-\b u_h\|_0$ $O(h^r)$} & 
        \multicolumn{2}{c|}{$\|\nabla_w(\b u-\b u_h)\|_0$ $O(h^r)$}  & 
   \multicolumn{2}{c}{$\|Q_0p-p_h\|_0$  $O(h^r)$ } \\ \hline  
     & \multicolumn{6}{c}{By the $(P_1,P_1)$-$P_0$ weak Galerkin finite element}  \\ 
 2 &    0.983E-2 &1.80 &    0.239E+0 &0.80 &    0.106E+0 &0.00 \\
 3 &    0.310E-2 &1.67 &    0.127E+0 &0.91 &    0.460E-1 &1.21 \\
 4 &    0.857E-3 &1.85 &    0.653E-1 &0.96 &    0.154E-1 &1.58 \\
 \hline 
     & \multicolumn{6}{c}{By the $(P_2,P_2)$-$P_1$ weak Galerkin finite element}  \\ 
 1 &    0.282E-2 &0.00 &    0.472E-1 &0.00 &    0.116E-1 &0.00 \\
 2 &    0.410E-3 &2.78 &    0.126E-1 &1.91 &    0.203E-2 &2.51 \\
 3 &    0.555E-4 &2.88 &    0.320E-2 &1.97 &    0.356E-3 &2.51 \\
 \hline 
\end{tabular} \end{center}  \end{table}

\begin{figure}[H] \centering
  \begin{picture}(420,100)(0,20)
  \put(-10,-445){\includegraphics[width=410pt]{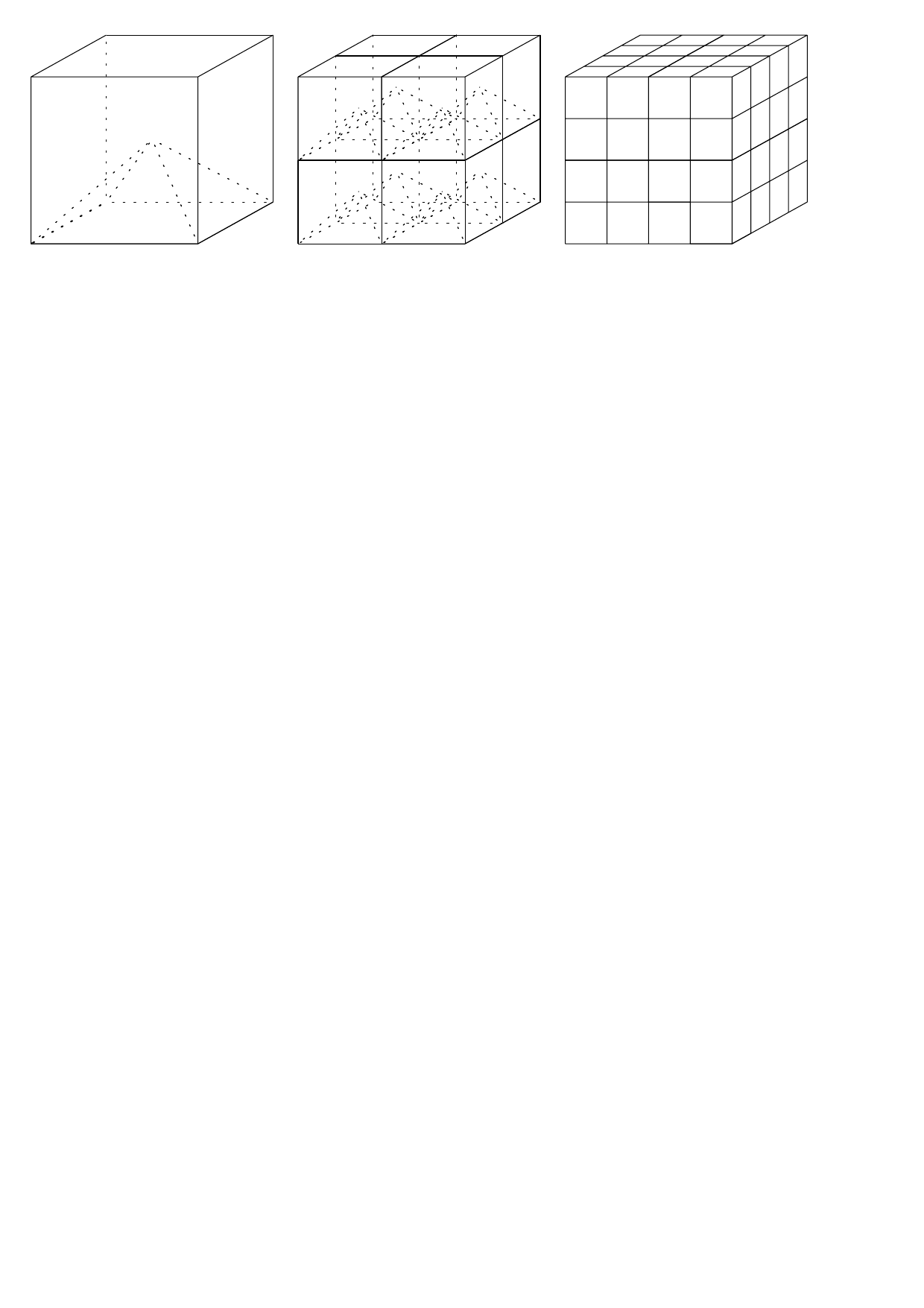}}
  \end{picture}
  \caption{The first three meshes $G_1$--$G_3$ for Table \ref{t3-3} } \label{g3-3}
	\end{figure}

We list the computational results for computing \eqref{s-2} on nonconvex polyhedral meshes shown in
   Figure \ref{g3-3}, by the weak Galerkin finite element methods,  in Table \ref{t3-3}.

  \begin{table}[H]
  \caption{ Error profile for computing \eqref{s-2} on Figure \ref{g3-3} meshes.} \label{t3-3}
\begin{center}  
   \begin{tabular}{c|rr|rr|rr}  
 \hline 
$G_i$ & \multicolumn{2}{c|}{$\|\b u-\b u_h\|_0$ $O(h^r)$} & 
        \multicolumn{2}{c|}{$\|\nabla_w(\b u-\b u_h)\|_0$ $O(h^r)$}  & 
   \multicolumn{2}{c}{$\|Q_0p-p_h\|_0$  $O(h^r)$ } \\ \hline  
     & \multicolumn{6}{c}{By the $(P_1,P_1)$-$P_0$ weak Galerkin finite element}  \\ 
 2 &    0.159E-1 &1.85 &    0.571E+0 &0.52 &    0.228E+0 &0.00 \\
 3 &    0.449E-2 &1.83 &    0.283E+0 &1.01 &    0.891E-1 &1.36 \\
 4 &    0.114E-2 &1.98 &    0.139E+0 &1.03 &    0.270E-1 &1.72 \\
 \hline 
     & \multicolumn{6}{c}{By the $(P_2,P_2)$-$P_1$ weak Galerkin finite element}  \\ 
 2 &    0.891E-3 &2.83 &    0.308E-1 &1.99 &    0.743E-2 &2.48 \\
 3 &    0.119E-3 &2.90 &    0.709E-2 &2.12 &    0.917E-3 &3.02 \\
 4 &    0.156E-4 &2.93 &    0.175E-2 &2.02 &    0.133E-3 &2.79 \\
 \hline 
\end{tabular} \end{center}  \end{table}

\begin{figure}[H] \centering
  \begin{picture}(420,100)(0,20)
  \put(-10,-445){\includegraphics[width=410pt]{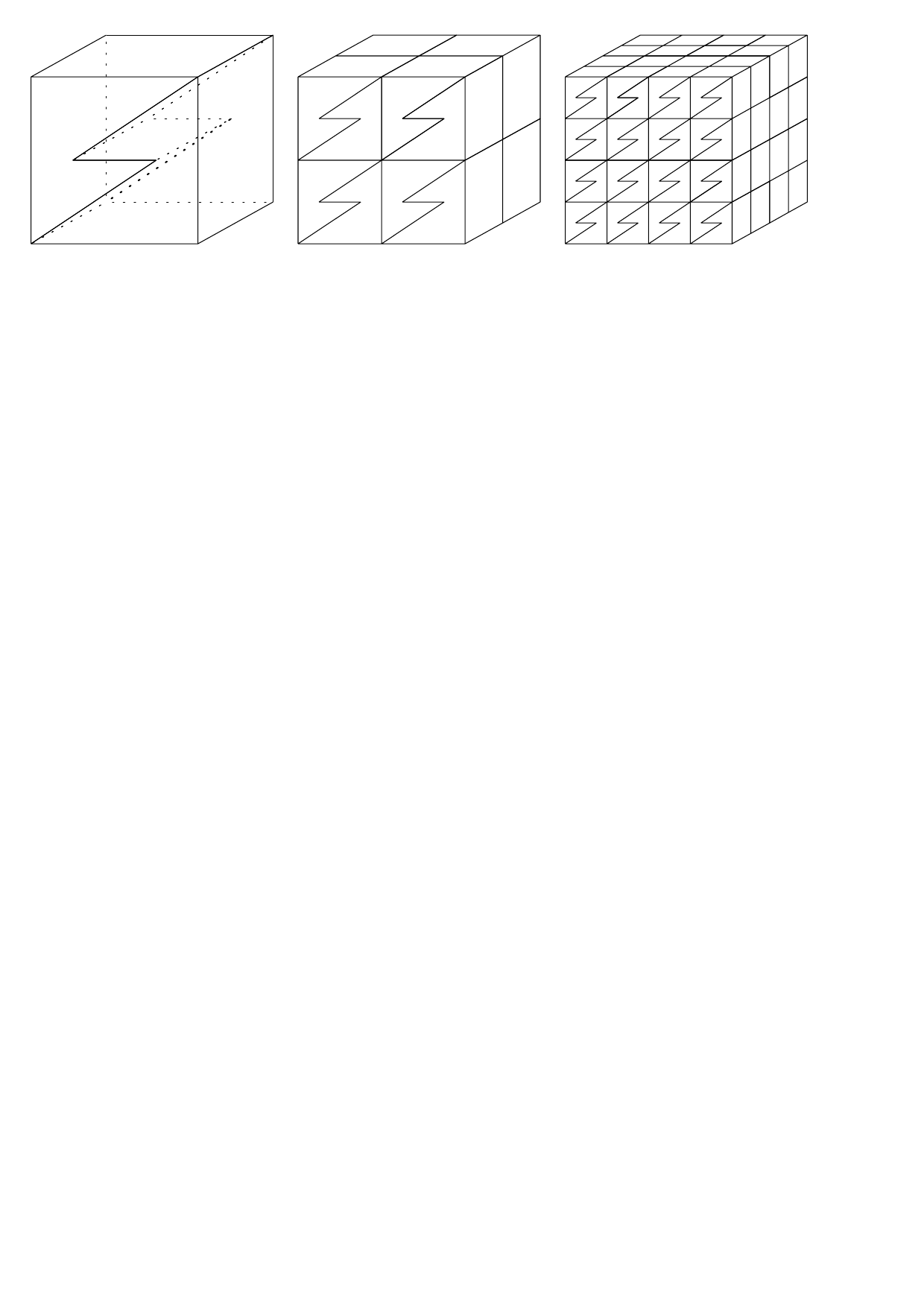}}
  \end{picture}
  \caption{The first three meshes $G_1$--$G_3$ for Table \ref{t3-4} } \label{g3-4}
	\end{figure}

We list the computational results for computing \eqref{s-2} on nonconvex polyhedral  meshes shown in
   Figure \ref{g3-4}, by the weak Galerkin finite element methods,  in Table \ref{t3-4}.

  \begin{table}[H]
  \caption{ Error profile for computing \eqref{s-2} on Figure \ref{g3-4} meshes.} \label{t3-4}
\begin{center}  
   \begin{tabular}{c|rr|rr|rr}  
 \hline 
$G_i$ & \multicolumn{2}{c|}{$\|\b u-\b u_h\|_0$ $O(h^r)$} & 
        \multicolumn{2}{c|}{$\|\nabla_w(\b u-\b u_h)\|_0$ $O(h^r)$}  & 
   \multicolumn{2}{c}{$\|Q_0p-p_h\|_0$  $O(h^r)$ } \\ \hline  
     & \multicolumn{6}{c}{By the $(P_1,P_1)$-$P_0$ weak Galerkin finite element}  \\ 
 2 &    0.172E-1 &1.73 &    0.643E+0 &0.78 &    0.279E+0 &0.00 \\
 3 &    0.493E-2 &1.81 &    0.331E+0 &0.96 &    0.105E+0 &1.41 \\
 4 &    0.127E-2 &1.96 &    0.166E+0 &0.99 &    0.321E-1 &1.70 \\
 \hline 
     & \multicolumn{6}{c}{By the $(P_2,P_2)$-$P_1$ weak Galerkin finite element}  \\ 
 2 &    0.993E-3 &2.65 &    0.384E-1 &2.04 &    0.850E-2 &2.37 \\
 3 &    0.138E-3 &2.84 &    0.935E-2 &2.04 &    0.128E-2 &2.73 \\
 4 &    0.181E-4 &2.93 &    0.231E-2 &2.01 &    0.201E-3 &2.67 \\
 \hline 
     & \multicolumn{6}{c}{By the $(P_3,P_3)$-$P_2$ weak Galerkin finite element}  \\ 
 1 &    0.395E-3 &0.00 &    0.160E-1 &0.00 &    0.539E-2 &0.00 \\
 2 &    0.260E-4 &3.93 &    0.175E-2 &3.19 &    0.325E-3 &4.05 \\
 3 &    0.169E-5 &3.94 &    0.194E-3 &3.18 &    0.216E-4 &3.91 \\
 \hline 
\end{tabular} \end{center}  \end{table}

\end{document}